	\newcommand\mymatrixbraceoffsetv{0.2em}
	\newcommand*\mymatrixbracebottom[4][m]{
	    \draw[mymatrixbrace] ($(#1.south west)!(#1-1-#3.south east)!(#1.south east)-(0,\mymatrixbraceoffsetv)$)
	        -- node[below=2pt] {#4} 
	        ($(#1.south west)!(#1-1-#2.south west)!(#1.south east)-(0,\mymatrixbraceoffsetv)$);
	}
	\font\tenex=cmex10 
	\newdimen\p@renwd
	\def\bmat#1{\begingroup \m@th
	   \setbox\z@\vbox{\def\cr{\crcr\noalign{\kern2\p@\global\let\cr\endline}}%
	     \ialign{$##$\hfil\kern2\p@\kern\p@renwd&\thinspace\hfil$##$\hfil
	       &&\quad\hfil$##$\hfil\crcr
	       \omit\strut\hfil\crcr\noalign{\kern-\baselineskip}%
	       #1\crcr\omit\strut\cr}}%
	   \setbox\tw@\vbox{\unvcopy\z@\global\setbox\@ne\lastbox}%
	   \setbox\tw@\hbox{\unhbox\@ne\unskip\global\setbox\@ne\lastbox}%
	   \setbox\tw@\hbox{$\kern\wd\@ne\kern-\p@renwd\left[\kern-\wd\@ne
	     \global\setbox\@ne\vbox{\box\@ne\kern2\p@}%
	     \vcenter{\kern-\ht\@ne\unvbox\z@\kern-\baselineskip}\,\right]$}%
	   \null\;\vbox{\kern\ht\@ne\box\tw@}\endgroup}
	\newcommand{\R}{\mathbb {R}}
	\newcommand{\C}{\mathbb {C}}
	\newcommand{\trace}{\mathrm{tr}}
	\newcommand{\diag}{\mathrm{diag}}
	\newtheorem{theorem}{Theorem}
	\newtheorem{corollary}[theorem]{Corollary}
	\newtheorem{lemma}[theorem]{Lemma}
	\newtheorem{remark}[theorem]{Remark}
	\newtheorem{definition}[theorem]{Definition}
	\newtheorem{proposition}[theorem]{Proposition}
	\title{Divide and conquer methods for functions of matrices with banded or hierarchical low-rank structure}
	\author{Alice Cortinovis\footnote{MATH-ANCHP, École Polytechnique Fédérale de Lausanne, Station 8, 1015 Lausanne, Switzerland. E-mail: alice.cortinovis@epfl.ch. The work of Alice Cortinovis has been supported by the SNSF research project \emph{Fast algorithms from low-rank updates}, grant number: 200020\_178806.} \and Daniel Kressner\footnote{MATH-ANCHP, École Polytechnique Fédérale de Lausanne, Station 8, 1015 Lausanne, Switzerland. E-mail: daniel.kressner@epfl.ch} \and Stefano Massei\footnote{Centre for Analysis, Scientific Computing and Applications (CASA), TU Eindhoven, Eindhoven,
Netherlands. E-mail: s.massei@tue.nl}}
	\date{}
\begin{document}
	
	\maketitle
	
	\begin{abstract}
	    This work is concerned with approximating matrix functions for banded matrices, hierarchically semiseparable matrices, and related structures. We develop a new divide-and-conquer method based on (rational) Krylov subspace methods for performing low-rank updates of matrix functions.
Our convergence analysis of the newly proposed method proceeds by establishing   relations to best polynomial and rational approximation. When only the trace or the diagonal of the matrix function is of interest, we demonstrate -- in practice and in theory -- that convergence can be faster. For the special case of a banded matrix, we show that the divide-and-conquer method reduces to a much simpler algorithm, which proceeds by computing matrix functions of small submatrices. Numerical experiments confirm the effectiveness of the newly developed algorithms for computing large-scale matrix functions from a wide variety of applications.
	\end{abstract}
	
	\noindent
	\textbf{Keywords}: matrix function, banded matrix, hierarchically semiseparable matrix, Krylov subspace method, divide-and-conquer algorithm.\\ 
	\textbf{MSC 2010}: 65F50, 65F60.
	

\section{Introduction}

The task of evaluating matrix functions $f(A)$ for $A\in\mathbb R^{n\times n}$, such as the matrix exponential or the matrix square root, has been studied intensively in the last two decades \cite{higham2008functions}. These problems arise in the numerical solution of partial differential equations~\cite{Druskin2016,Lee2010}, electronic structure calculations~\cite{bekas2007estimator, Goedecker1999}, and social network analysis~\cite{Estrada2010}, as we will see in more detail in Section~\ref{sec:testDC} and Section~\ref{sec:examplepoly}. In this paper we are concerned with  leveraging sparse and low-rank structures in $f(A)$ that are inherited from $A$. More specifically, we consider the case where $A$ is banded, or has off-diagonal blocks with low-rank, e.g. when $A$ is hierarchically semiseparable (HSS)~\cite{Xia2010}, and we design efficient procedures for computing and storing $f(A)$ and related quantities of interest.  

When $A$ is banded and $f$ is well approximated by a low-degree polynomial on the spectrum of $A$, the matrix function $f(A)$ can usually be well approximated by a banded matrix. Many a priori results confirm this property.
For example, the entries of the inverse of a tridiagonal matrix $A$ decay quickly with the distance to the diagonal, provided that $A$ is well conditioned~\cite{Demko1984}. 
Such decay properties extend to inverses of symmetric banded matrices~\cite{Demko1984}, to more general matrix functions of symmetric banded matrices~\cite{Benzi1999}, and to symmetric sparse matrices with more general sparsity patterns~\cite{Benzi2015}. When $A$ is not symmetric, one can proceed via diagonalization assuming a well conditioned eigenvector matrix~\cite{BenziRazouk2007, Pozza2019} or by leveraging the Crouzeix-Palencia result~\cite{CrouzeixPalencia2017}, at the price of considering polynomial approximations of $f(z)$ on the numerical range of $A$~\cite{BenziBoito2014}. 

For computing a matrix function $f(A)$ when $A$ is banded, to directly exploit the approximate bandedness of $f(A)$ one can use an a priori polynomial approximation $p$ and evaluate $f(A) \approx p(A)$. Compared to $A$, the width of the band gets multiplied by the degree of the polynomial $p$. This technique is used, for instance, in electronic structure methods~\cite{Goedecker1999} combined with Chebyshev interpolation~\cite{Benzi2013}. In~\cite{BenziRazouk2007}, polynomial approximation is combined with a dropping strategy in order to maintain a low bandwidth in the approximation of $f(A)$. A possible alternative to a priori polynomial approximations is to adapt an existing method for dense matrices to banded matrices, possibly combining it with thresholding in order to maintain sparsity; for example, Newton-Schultz iterations have been used for the sign function in the context of electronic structure calculations~\cite{Dawson2018, Nemeth2000}. For functions of banded Toeplitz matrices, structured thresholding techniques have been designed in order to maintain a Toeplitz plus low-rank structure \cite{Bini2016,bini2019exponential}. 

When $f$ cannot be well approximated on the spectrum of $A$ by a low-degree polynomial, the techniques described above may lead to poor results. Often, low-rank structures come to the rescue. To illustrate this, let us consider again an invertible tridiagonal matrix $A$. When $A$ is very ill-conditioned, the decay of the off-diagonal entries of $A^{-1}$ mentioned above vanishes; however, all the off-diagonal blocks of $A^{-1}$ have rank 1. Therefore, $A^{-1}$ can be efficiently represented by a hierarchically semiseparable matrix~\cite[Section 3.9]{Hackbusch2015}. 
This also means that if we can choose a priori a rational function $r$ with small degree that well approximates $f$, then we can approximate $f(A) \approx r(A)$ in the HSS format. An advantage of this approach is that it also works for matrices with off-diagonal low-rank structure. For the exponential, there exists an excellent rational approximation on the negative real axis~\cite{Andersson1981}, which implies a good approximation of $\exp(-A)$ for a symmetric positive definite (SPD) matrix $A$ even when the norm of $A$ is large; see~\cite{Guettel2013} for further examples. Another favorable class of functions is the one of Markov functions, that has been recently discussed in~\cite{Beckermann2021} in the context of a Toeplitz matrix argument. Rational functions approximating $f$ can also be obtained by discretizing the Cauchy integral representation of the function; this approach is used, for instance, for the exponential operator~\cite{Gavrilyuk2002}, for step functions arising in the computation of spectral projectors~\cite{KressnerSusnjara2017}, and for matrix functions that may have singularities inside the contour of integration~\cite{MasseiRobol2017}. 
An alternative to a priori rational approximation is the use of iterative methods, such as for the matrix sign function~\cite{Grasedyck2003} or the matrix square root of a symmetric positive definite matrix~\cite[Section 15.3]{Hackbusch2015}; the iterations can be done in HSS arithmetic and possibly some truncation strategies are needed in order to maintain a low-rank structure.

In this work, we design new algorithms for approximating matrix functions of matrices which can be recursively decomposed as the sum of a block diagonal matrix $D$ and a low-rank correction. This is the case for banded matrices, HSS matrices, and sparse matrices corresponding to graphs with community structure~\cite{Newman2006}. As shown in~\cite{BeckermannKressnerSchweitzer2018, BCKS2020}, the matrix function update $f(A) - f(D)$ is often numerically low-rank and can be efficiently approximated using a subspace projection approach with suitable Krylov subspaces. In this work we perform the evaluation of $f(D)$ recursively, leading to a divide-and-conquer (D\&C) algorithm. Similarly to the a priori bounds on $f(A)$ mentioned above, we prove that the effectiveness of the D\&C algorithm is related to best polynomial or rational approximation. 
However, let us emphasize that the use of Krylov subspaces bypasses the need of choosing an a priori polynomial or rational approximation to $f$ and this can be beneficial if there are some outliers in the spectrum of $A$.

For banded matrices $A$, polynomial Krylov subspaces associated to low-rank updates inherit sparsity. Thanks to this fact, we can use a splitting approach to develop a method that allows for a more compact description of the low-rank updates and a more efficient implementation.
Our algorithm is based on covering $A$ with overlappping blocks and only needs the evaluation of $f$ on these blocks. A related, although significantly different, technique has been proposed in~\cite{Shao2014} for approximating the exponential of infinite banded matrices.  
The equivalence of our method with low-rank updates allows us to prove a convergence result that connects the error of the algorithm with polynomial approximations of $f$. 

In many applications, only specific quantities associated to $f(A)$ are needed. For example, the trace of matrix functions is used to compute spectral densities~\cite{Lin2016}, log determinants~\cite{Gardner2018}, Schatten $p$-norms~\cite{Dudley2020}, the Estrada index of a graph~\cite{Estrada2010}, and also arises in lattice quantum chromodynamics~\cite{Wu2016}. The diagonal of a matrix function is needed, for instance, in Density Functional Theory~\cite{bekas2007estimator}, electronic structure calculations~\cite{Lin2009}, and uncertainty quantification~\cite{Tang2012}. 
Our algorithms can be simplified in case one is only interested in such quantities. We observe accelerated convergence and we confirm this by theoretical results.

The remainder of the paper is organized as follows. In Section~\ref{sec:lowrankupdates} we recall some results on low-rank updates of matrix functions and we present a new convergence result regarding the update of the trace of a matrix function.  Section~\ref{sec:general} is dedicated to the D\&C algorithm for matrix functions and its convergence analysis. Numerical experiments for banded and HSS matrix arguments are presented in Section~\ref{sec:testDC}. In Section~\ref{sec:splitting} we present and analyze a block diagonal splitting algorithm that is specialized for banded matrices. The performances of the splitting algorithm are validated in Section~\ref{sec:examplepoly}. Finally, some conclusions are drawn in Section~\ref{sec:conclusions}.

\section{Low-rank updates of matrix functions}\label{sec:lowrankupdates}

In this section, we summarize the algorithms from~\cite{BeckermannKressnerSchweitzer2018,BCKS2020} on low-rank updates of matrix functions and improve their convergence analysis in the case of trace approximation.

Given $A,R\in\mathbb R^{n\times n}$, with $R$ of low rank, and a function $f(z)$ defined on the spectra of $A$ and $A+R$, one aims at computing the update
$$
f(A+R)-f(A).
$$
It turns out that this matrix usually has low numerical rank, in the sense that it can be well approximated by a low-rank matrix.
This motivates to search for approximations of the form
\begin{equation}\label{eq:update}
f(A+R)-f(A)  \approx U_m X_m(f) V_m^T,
\end{equation}
where $U_m,V_m$ are orthonormal bases of (low-dimensional) subspaces $\mathcal{U}_m, \mathcal{V}_m$ of $\R^n$.
%
%
For reasons explained in~\cite{BeckermannKressnerSchweitzer2018,BCKS2020}, a suitable choice for the (small) coefficient matrix $X_m(f)$ is the $(1, 2)$-block of the matrix
\begin{equation*}
f\left ( \begin{bmatrix} U_m^T A U_m & U_m^T R V_m \\ 0 & V_m^T (A+R) V_m \end{bmatrix} \right ) =: f\left ( \begin{bmatrix} G_m & U_m^T R V_m \\ 0 & H_m \end{bmatrix} \right ) .
\end{equation*}

The quality of the approximation~\eqref{eq:update} strongly depends on the choice of $\mathcal U_m$, $\mathcal V_m$. A natural choice are (rational) Krylov subspaces: Given a factorization of the low-rank matrix $R$, \[R =  B J C^T, \quad B,C \in \R^{n\times \text{rank}(R)}, \quad J \in \R^{\text{rank}(R) \times \text{rank}(R)},  \]
we let $\mathcal U_m$ and $\mathcal V_m$ be Krylov subspaces generated with the matrices $A$ and $A^T$ and starting (block) vectors $B$ and $C$, respectively. When choosing a \emph{polynomial Krylov subspace} then \begin{equation*}
\mathcal U_m = \mathcal{K}_m(A, B) := \mathrm{span} \big[ B, A B, A^2 B, \ldots, A^{m-1} B\big]
\end{equation*}
and $\mathcal V_m$ is defined analogously. When choosing a \emph{rational Krylov subspace} \cite{ruhe1984rational} associated with $q(z) = (z - \xi_1) \cdots (z - \xi_m)$ for prescribed poles $\mathbf{\xi}=(\xi_1, \ldots, \xi_m)^T \in \C^m$, then
\begin{equation} \label{eq:ratkrylov}
\mathcal U_m = \mathcal{RK}_m(A, B,\mathbf \xi) := \mathrm{span}\big[q_m(A)^{-1} B, q_m(A)^{-1} AB, q_m(A)^{-1} A^2B, \ldots, q_m(A)^{-1} A^{m-1}B\big].
\end{equation}
To make sure that $\mathcal U_m$ is real, the set of poles is assumed to be closed under complex conjugation. Also, we allow for infinite poles and consider the polynomial Krylov subspace as the particular case where $\xi_j=\infty$, $j=1,\dots,m$. 

The orthonormal bases $U_m, V_m$ of 
$\mathcal{RK}_m(A, U_R,\mathbf \xi)$, $\mathcal{RK}_m(A^T, V_R,\mathbf \xi)$ are computed with the block rational Arnoldi method described in~\cite{elsworth2020block,berljafa2014rational}. This computation is performed incrementally with respect to $m$ and yields the compressed matrices $U_m^TAU_m$ and $V_m^T(A+R)V_m$ nearly for free. For choosing $m$, we use the heuristic error estimate from~\cite{BeckermannKressnerSchweitzer2018}:
\begin{eqnarray*}
\|f(A+R)-f(A)-U_{m-d} X_{m-d}(f)V_{m-d}^T\|_F & \approx & \|U_m X_m(f)V_m^T-U_{m-d} X_{m-d}(f)V_{m-d}^T\|_F \nonumber \\
&=& \|X_m(f)-\left[\begin{smallmatrix}
	X_{m-d}(f)&0\\ 0&0
	\end{smallmatrix}\right]\|_F
\end{eqnarray*}
 for a small integer $d$, the so called \emph{lag parameter}; $\| \cdot \|_F$ denotes the Frobenius norm of a matrix.
 The whole procedure is summarized in Algorithm~\ref{alg:kryl}.
 Each step of the block rational Arnoldi method in lines \ref{line:k1}-\ref{line:k2} requires either matrix-vector products with $A,A^T$ (for an infinite pole) or solving shifted linear systems with $A,A^T$ (for a finite pole). When only a few different finite poles are present, it can be beneficial to precompute the LU factorization of the shifted matrix $A$ and reuse it across several steps.
We refer to~\cite[Section 3.1]{BCKS2020} and the references therein concerning further implementation details.
 

\begin{algorithm}[H]
	\caption{Krylov subspace projection for approximating $f(A+BJC^T)-f(A)$}
	\label{alg:kryl}
	\begin{algorithmic}[1]
		\PRINT \textsc{Krylov\_proj}($A,B,J,C, \mathbf{\xi}, f(z),d,\varepsilon$) \hspace{2cm} $\triangleright\ \  \mathbf{\xi}=(\xi_1,\dots,\xi_{m_{\max}})^T$
			\FOR{$m=1,\dots,m_{\max}$}
			\STATE $\mathbf{\xi}^{(m)} \gets(\xi_1,\dots,\xi_{m})^T$ 
			\STATE{Compute orthonormal basis $U_m$ of $\mathcal{RK}_{m}(A,B,\mathbf{\xi}^{(m)})$ and $G_m=U_m^TAU_m$}\label{line:k1}
			\STATE{Compute orthonormal basis $V_m$ of $\mathcal{RK}_{m}(A^T,C,\mathbf{\xi}^{(m)})$ and $H_m=V_m^T(A+BJC^T)V_m$}\label{line:k2}
			\STATE{\label{line:compxmf}Compute $X_m(f)$ as the $(1,2)$ block of $f\left(\left[\begin{smallmatrix}
				G_m&(U_m^T B)J(C^T V_m)\\ & H_m
				\end{smallmatrix}\right]\right)$}\label{line:fun-proj}
			\IF{$m>d$ and $\left \|X_m(f)-\left[\begin{smallmatrix}
				X_{m-d}(f)&0\\ 0&0
				\end{smallmatrix}\right]\right \|_F<\varepsilon$}
			\STATE\textbf{break}
			\ENDIF
			\ENDFOR
			\RETURN $U_m,X_m(f),V_m$
		\end{algorithmic}
	\end{algorithm}
	
When $A$ and $R$ are symmetric, we can choose $C = B$. It follows that $U_m = V_m$ and hence only one basis needs to be generated; line~\ref{line:k2} of Algorithm~\ref{alg:kryl} is skipped. Moreover, the computation of $X_m(f)$ in line~\ref{line:compxmf} simplifies to 
\begin{equation*}
X_m(f) = f(U_m^T (A+R) U_m) - f(U_m^T A U_m).
\end{equation*}

\subsection{Exactness results and convergence of Algorithm~\ref{alg:kryl}}\label{sec:traceSymmetric}

We let $\Pi_m$ denote the space of polynomials with degree bounded by $m$. In~\cite[Theorem 3.2]{BeckermannKressnerSchweitzer2018} it is shown that, when using polynomial Krylov subspaces, the approximation $U_m X_m(f) V_m^T$ returned by Algorithm~\ref{alg:kryl} equals the exact update $f(A+R)-f(A)$
when $f \in \Pi_m$. In~\cite[Theorem 3.3]{BCKS2020} this was extended to the rational Krylov subspace~\eqref{eq:ratkrylov}; in this case $U_m X_m(f) V_m^T$ is exact for all $f \in \Pi_m / q_m$, that is, all rational functions of the form $p(z) / q_m(z)$ with $p \in \Pi_m$. Such exactness results are turned into convergence results via polynomial/rational approximation.

\begin{remark}\label{rmk:exactness}
For future reference, we note that the exactness results explained above 
also hold when $U_m$ and $V_m$ are orthonormal bases of subspaces of $\R^n$ which \emph{contain} the Krylov subspaces $\mathcal{U}_m$ and $\mathcal{V}_m$, respectively.
\end{remark}

When $A$ and $R$ are symmetric, a better exactness result holds when considering the update of the trace, that is, the approximation
\begin{equation*} 
\trace(f(A + BJB^T) - f(A)) \approx \trace(U_m X_m(f) U_m^T) = \trace(X_m(f))
\end{equation*}
instead of the approximation of the full update.

 \begin{theorem}\label{thm:exactnessTrace}
  Let $A \in \R^{n \times n}$ and $J \in \R^{b \times b}$ be symmetric, and let $B \in \R^{n \times b}$. Let $U_m$ be an orthonormal basis of $\mathcal{K}_m(A, B)$. Then 
  \begin{equation*}
   \trace(X_m(p)) = \trace(p(A + BJB^T) - p(A)) \text{ for all } p \in \Pi_{2m}.
  \end{equation*}
 \end{theorem}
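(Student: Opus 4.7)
The strategy is to upgrade the matrix-level exactness of degree $m$ (Remark~\ref{rmk:exactness}) to a trace-level exactness of degree $2m$, using the cyclic property of the trace to gain an extra factor of $m$. By linearity we may assume $p(z) = z^k$ for some $0 \le k \le 2m$. Write $R = BJB^T$, $E_1 := U_m^T B$, $\tilde R := U_m^T R U_m = E_1 J E_1^T$, and $H_m := G_m + \tilde R$; in the symmetric case, the simplification after Algorithm~\ref{alg:kryl} gives $X_m(p) = p(H_m) - p(G_m)$.

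Expanding $(A+R)^k - A^k$ as a sum over all length-$k$ words in the alphabet $\{A, R\}$ that contain at least one $R$, the cyclic property of the trace allows us to rotate each word so that it begins with $R$. Substituting $R = BJB^T$ and cycling the leading $B$ to the end transforms each trace into an expression of the form $\trace(J M_{i_1} J M_{i_2} \cdots J M_{i_s})$, for some $s \ge 1$ and non-negative integers $i_1,\dots,i_s$ with $i_1 + \cdots + i_s = k - s$, where $M_i := B^T A^i B$; in particular $\max_j i_j \le k - s \le 2m - 1$. Applying the identical combinatorial manipulations to $\trace(H_m^k - G_m^k)$ yields the same weighted sum of such traces, but with $M_i$ replaced by $\tilde M_i := E_1^T G_m^i E_1$. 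The theorem therefore reduces to the block moment-matching identity
\[
M_i = \tilde M_i \qquad \text{for all } 0 \le i \le 2m - 1.
\]

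This identity is the standard block-Lanczos relation. Since $A^j B \in \mathcal{K}_m(A, B) = \operatorname{range}(U_m)$ for $0 \le j \le m - 1$, induction on $j$ (using $A^j B = A \cdot A^{j-1}B$ and $U_m^T A U_m = G_m$) yields $A^j B = U_m G_m^j E_1$ for $0 \le j \le m - 1$. Writing $i = j + l$ with $0 \le j \le m - 1$ and $0 \le l \le m$, one computes $B^T A^i B = (A^j B)^T A^l B = E_1^T G_m^j \, U_m^T A^l B$; for $l < m$ the inductive formula gives $E_1^T G_m^{j+l} E_1$ directly, while for $l = m$ we invoke $U_m^T A^m B = U_m^T A (U_m G_m^{m-1} E_1) = G_m^m E_1$ to reach the same conclusion. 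The main obstacle is bookkeeping: one must check that the cyclic rotation strictly lowers the maximal exponent of $A$ appearing in each trace by at least one (precisely because $s \ge 1$, i.e., because the $A^k$ term has been subtracted off), which is what yields the bound $2m - 1$ rather than $2m$. This is the direct analogue, in the present block setting, of the algebraic degree $2m - 1$ attained by $m$-node Gauss quadrature.
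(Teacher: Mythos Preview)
Your proof is correct and takes essentially the same approach as the paper: both expand the monomial difference over words in $\{A, R\}$, cycle the trace to isolate factors $B^T A^i B$ (respectively $E_1^T G_m^i E_1$), and reduce to the block-moment identity $B^T A^i B = E_1^T G_m^i E_1$ for $0 \le i \le 2m-1$, proved via Saad's lemma together with the symmetry of $A$. Your explicit separation of the combinatorics from the moment-matching step is slightly cleaner than the paper's $C_{a,b}$ bookkeeping, but the substance is identical.
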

 \begin{proof}
 By linearity it is sufficient to show that the theorem holds for monomials, that is, we need to prove that
 \begin{equation*}
  \trace\left ((U_m^T(A + BJB^T)U_m)^j \right ) - \trace\left ((U_m^T A U_m)^j \right ) = \trace \left ((A + BJB^T)^j \right ) - \trace(A^j)
 \end{equation*}
 for $j = 0, 1, 2, \ldots, 2m$. The left hand side is a sum of terms of the following form:
 \begin{equation}\label{eq:1term}
  \trace \left ( (U_m^T A U_m)^{a_0} (U_m^T BJB^T U_m)^{b_1} (U_m^T A U_m)^{a_1}\cdots  (U_m^T BJB^T U_m)^{b_h} (U_m^T A U_m)^{a_h} \right ),
 \end{equation}
for some $h \ge 1$, $a_0, a_h \ge 0$, $a_1, \ldots, a_{h-1} \ge 1$, $b_1, \ldots, b_h \ge 1$, and $a_0 + b_1 + \ldots + a_{h-1} + b_h + a_h = j$. By~\cite[Lemma 3.1]{Saad1992} we have that
\begin{equation}\label{eq:exactpolyK}
U_m (U_m^T A U_m)^k U_m^T B = A^k B
\end{equation}
for all $k = 0, \ldots, m-1$. Moreover, it is easy to see that for $k \ge 1$ we have $(U_m^T BJB^T U_m)^k = U_m^T (BJB^T )^kU_m=U_m^T B (JB^T B)^{k-1} J B^T U_m$. Then, using \eqref{eq:exactpolyK} and the cyclic property of the trace we rewrite~\eqref{eq:1term} as
\begin{equation}\label{eq:shuffled}
\begin{split}
&\trace\left((U_m^T A U_m)^{a_0} (U_m^T BJB^T U_m)^{b_1} (U_m^T A U_m)^{a_1}\cdots  (U_m^TBJB^TU_m)^{b_h} (U_m^T A U_m)^{a_h}\right)\\
&=\trace\left((U_m^T A U_m)^{a_0}U_m^TB\left(\prod_{i=1}^{h-1} C_{a_i,b_i}\right)(JB^TB)^{b_h-1} J B^T U_m(U_m^T A U_m)^{a_h}\right)\\
&=\trace\left(C_{a_0+a_h,b_h}\prod_{i=1}^{h-1} C_{a_i,b_i}\right)
\end{split}
\end{equation}
with $C_{a,b}:= (JB^TB)^{b-1} J B^T U_m(U_m^T A U_m)^{a}U_m^TB$ for $b \ge 1$ and $0 \le a \le 2m-1$. We claim that $C_{a,b}=(JB^TB)^{b-1}JB^TA^a B$: If $a \le m-1$, this follows directly from the exactness property~\eqref{eq:exactpolyK}; if $a \ge m$, we write $C_{a,b}$ as 
\begin{equation*}
 (JB^TB)^{b-1} J \underbrace{B^T U_m (U_m^T A U_m)^{m-1} U_m^T}_{B^TA^{m-1}} A \underbrace{U_m (U_m^T A U_m)^{a-m} U_m^T B}_{A^{a-m}B}
\end{equation*}
and use the exactness property~\eqref{eq:exactpolyK} on the two selected parts to arrive at the same conclusion. Finally, incorporating the rightmost factor $B$ of $C_{a_i,b_i}$ into $C_{a_{i+1},b_{i+1}}$  we obtain that \eqref{eq:shuffled} is equal to
\begin{equation*}
 \trace \left ( (JB^TB)^{b_h-1}JB^TA^{a_0+a_h}U_m^T (BJB^T)^{b_1} A^{a_1} \cdots (BJB^T)^{b_h} A^{a_{h-1}} B\right ).
\end{equation*}
By means of the cyclic property of the trace we finally get $$\trace \left ( A^{a_0} (BJB^T)^{b_1} A^{a_1} \cdots (BJB^T)^{b_h} A^{a_h}\right )$$
which matches the terms in the expansion of $\trace\left ((A + BJB^T)^j \right ) - \trace(A^j)$.
 \end{proof}
 
 For a set $\mathbb{E}$ and a function $f$ we denote $\|f\|_{\mathbb{E}} := \sup_{z \in \mathbb{E}} |f(z)|$. Moreover, we  indicate with $\Lambda(A)$ the convex hull of the eigenvalues of $A$. 
The following theorem provides an a priori estimate on the error of the approximation of the trace of a matrix function update obtained by Algorithm~\ref{alg:kryl}.
 
 \begin{theorem}\label{thm:convTraceUpdate}
  Let $A$ be symmetric and let $f$ be defined on an interval $\mathbb{E} \subset \R$ containing the eigenvalues of $A$ and  $A + BJB^T$. Then
  \begin{equation*}
   | \trace(f(A + BJB^T) - f(A)) - \trace(X_m(f)) | \le 4n \min_{p \in \Pi_{2m}} \| f - p \|_{\mathbb{E}}.
  \end{equation*}
 \end{theorem}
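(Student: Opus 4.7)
The plan is to exploit Theorem~\ref{thm:exactnessTrace} via the standard ``add and subtract a best approximant'' trick: letting $p \in \Pi_{2m}$ be arbitrary and writing $f = p + (f-p)$, the exactness result (combined with linearity of both the trace and the map $f \mapsto X_m(f)$) eliminates the polynomial part, so that
\begin{equation*}
\trace(f(A + BJB^T) - f(A)) - \trace(X_m(f)) = \trace(g(A + BJB^T) - g(A)) - \trace(X_m(g)),
\end{equation*}
where $g := f - p$. The symmetric version of $X_m$ recorded right before Section~\ref{sec:traceSymmetric} gives $X_m(g) = g(U_m^T(A+BJB^T)U_m) - g(U_m^T A U_m)$, so the right-hand side is the sum of four traces of the form $\pm\trace(g(M))$, where $M$ is one of $A$, $A+BJB^T$, $U_m^T A U_m$, or $U_m^T (A + BJB^T) U_m$.

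Next I would bound each of these four traces by $n \|g\|_{\mathbb{E}}$. For a symmetric matrix $M \in \R^{k \times k}$ whose eigenvalues all lie in $\mathbb{E}$, one has $|\trace(g(M))| \le k \|g\|_{\mathbb{E}}$, since $\trace(g(M))$ is the sum of $g$ evaluated at these eigenvalues. For $A$ and $A+BJB^T$ this is immediate from the hypothesis. For the compressed matrices one uses the Rayleigh–Ritz / Cauchy interlacing property: for any symmetric $M$ and orthonormal $U_m$, every eigenvalue of $U_m^T M U_m$ lies in $[\lambda_{\min}(M), \lambda_{\max}(M)] \subseteq \mathbb{E}$. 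Since $U_m$ has at most $n$ columns, each of the four contributions is at most $n \|g\|_{\mathbb{E}}$, and the triangle inequality gives a total of $4n \|f - p\|_{\mathbb{E}}$.

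Finally, taking the infimum over $p \in \Pi_{2m}$ yields the stated bound. I do not anticipate a serious obstacle: the exactness hypothesis $p \in \Pi_{2m}$ rather than $\Pi_m$ is precisely what Theorem~\ref{thm:exactnessTrace} provides, and the spectral containment in $\mathbb{E}$ for the projected matrices is a one-line consequence of the Courant–Fischer principle. The only point requiring minor care is to remember that linearity of $X_m$ in $f$ does hold (it is inherited from the fact that the $(1,2)$-block of $f(T)$ for a block triangular $T$ is linear in $f$ on the subspace of polynomials, and more generally on any family where the Fréchet calculus applies), which justifies the splitting $X_m(f) = X_m(p) + X_m(f-p)$ used in the first step.
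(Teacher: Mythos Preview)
Your proposal is correct and follows essentially the same approach as the paper: invoke the trace exactness result for $p\in\Pi_{2m}$, split $f=p+(f-p)$, reduce to four terms of the form $\trace((f-p)(M))$, and bound each by $n\|f-p\|_{\mathbb{E}}$ using that all relevant spectra lie in $\mathbb{E}$. The only cosmetic difference is that the paper routes the individual bounds through $|\trace(g(M))|\le n\|g(M)\|_2\le n\|g\|_{\mathbb{E}}$, whereas you bound the trace directly as a sum over eigenvalues; both are equivalent here.
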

 \begin{proof}
 By Theorem~\ref{thm:exactnessTrace}, for all polynomials $p \in \Pi_{2m}$ we have that 
 \begin{align*}
  |\trace(f(A + BJB^T) & - f(A)) - \trace(X_m(f)) |  \\
  & =  |\trace((f- p)(A + BJB^T)) - \trace((f-p)(A)) \\
  & \qquad + \trace((f-p)(U_m^T (A + BJB^T) U_m)) -\trace((f-p)(U_m^T A U_m))  | \\
  &  \le |\trace((f-p)(A + BJB^T))| + |\trace((f- p)(A))|  \\
  & \qquad + |\trace((f-p)(U_m^T (A + BJB^T) U_m))| + |\trace((f-p)(U_m^T A U_m))| \\
  & \le n\|(f-p)(A + BJB^T)\|_2 + n\|(f- p)(A)\|_2 \\
  & \qquad + n\|(f-p)(U_m^T (A + BJB^T) U_m)\|_2 + n\|(f-p)(U_m^T A U_m)\|_2.
 \end{align*}
 For a normal matrix $X$ and a function $g$, it holds that $\|g(X)\|_2 \le \|g\|_{\Lambda(X)}$, where $\| \cdot \|_2$ denotes the spectral norm of a matrix. As the spectral intervals of all matrices $A+BJB^T$, $A$, $U_m^T(A + BJB^T)U_m$, and $U_m^T A U_m$ are all contained in $\mathbb{E}$, it follows that the right hand side of the above equation is upper bounded by $4n\|f-p\|_{\mathbb{E}}$. Taking the minimum over all polynomials $p \in \Pi_{2m}$ concludes the proof.
 \end{proof}
 
 For example, consider SPD matrices $A\in\R^{n \times n}$ and $J \in \R^{b \times b}$, a matrix $B \in \R^{n \times b}$, and denote by $[\alpha, \beta]$ an interval containing the eigenvalues of $A$ and $A + BJB^T$. The best polynomial approximation error on such interval when $f$ is the square root function is proportional to $\gamma^m$ for $\gamma := (\sqrt{\beta/\alpha}-1)/(\sqrt{\beta/\alpha}+1)$; see, e.g.,~\cite[Theorem 8.2]{Trefethen2013}. Therefore, the error in the approximation of $f(A+BJB^T) - f(A)$ via Algorithm~\ref{alg:kryl} decreases geometrically with rate $\gamma$, while the error in the approximation of $\trace(f(A+BJB^T)-f(A))$ decreases with rate $\gamma^2$ thanks to Theorem~\ref{thm:convTraceUpdate}, that is, twice as fast.
 
 \paragraph{Numerical examples.}
 Figure~\ref{fig:doublespeedsymmetric} reports numerical experiments to explore the scope of the result of Theorem~\ref{thm:convTraceUpdate}. For this purpose, we have applied Algorithm~\ref{alg:kryl} with polynomial Krylov subspaces to random symmetric and nonsymmetric matrices $A$. In Figure~\ref{fig:doublespeedsymmetric} (a) and (b), the double speed of convergence predicted by Theorem~\ref{thm:convTraceUpdate} is only observed for the trace and when $A,R$ are symmetric. In all other situations, when approximating the diagonal or when $A$ is nonsymmetric, there is no significant difference in the convergence. In Figure~\ref{fig:doublespeedsymmetric} (c) a rational Krylov subspace method is used and the double speed of convergence of the trace approximation error disappears even when $A,R$ are symmetric.
 

\begin{figure}[ht]
     \centering
     \begin{subfigure}[b]{0.3\textwidth}
         \centering
         \includegraphics[width=\textwidth]{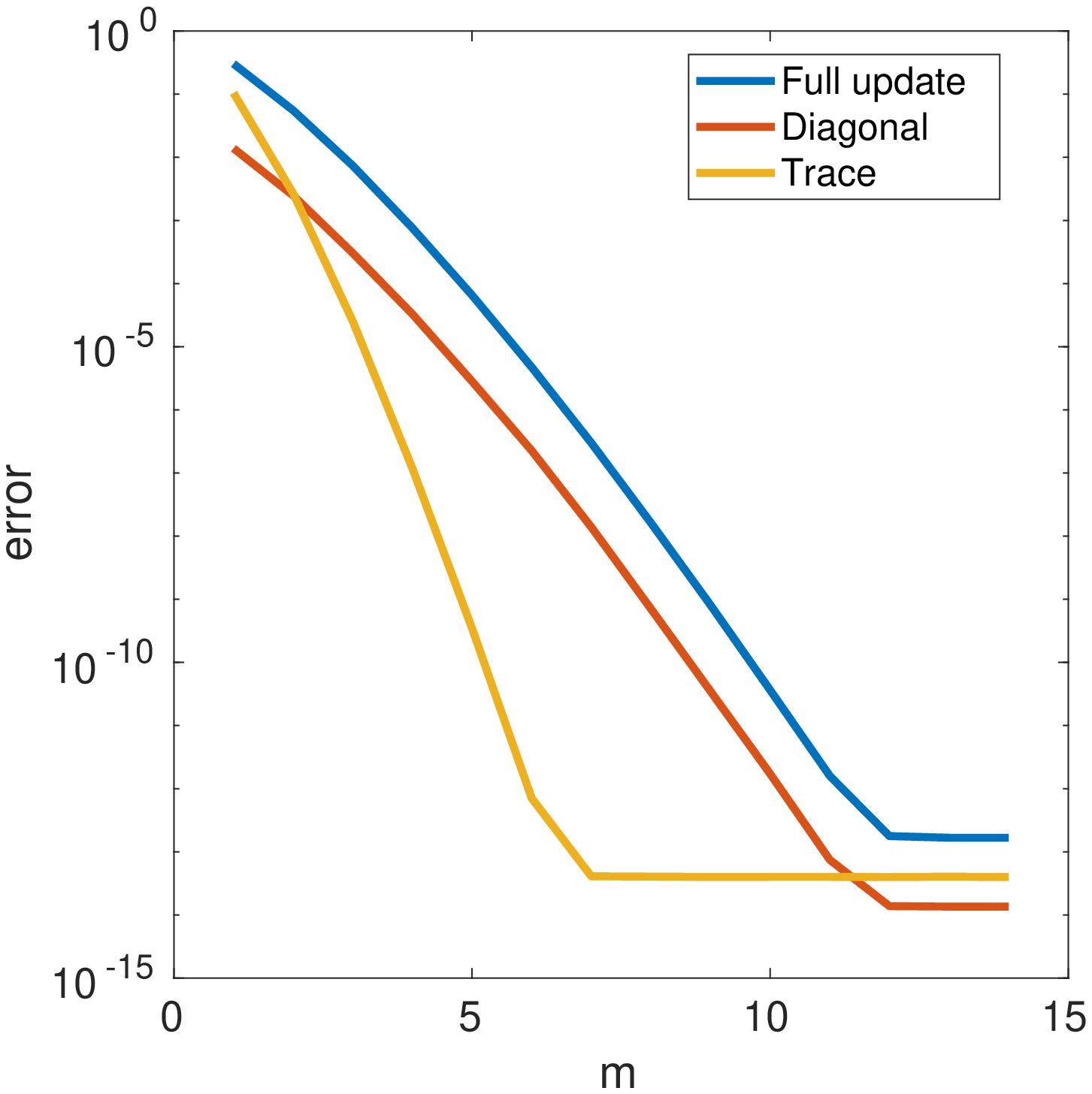}
         \caption{Polynomial Krylov applied to symmetric $A$ and $R$}
     \end{subfigure}
     \hfill
     \begin{subfigure}[b]{0.3\textwidth}
         \centering
         \includegraphics[width=\textwidth]{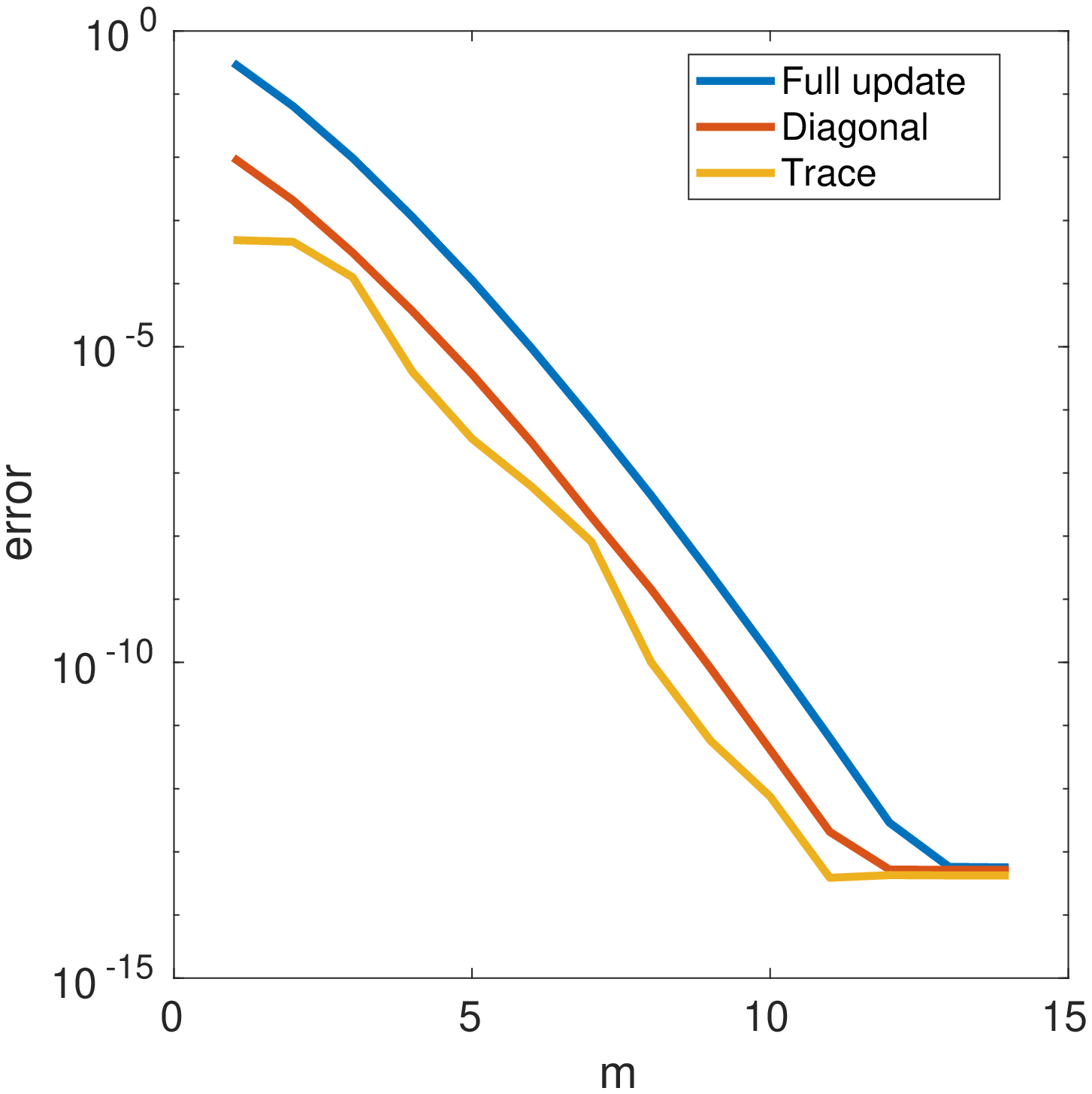}
         \caption{Polynomial Krylov applied to non-symmetric $A$}
     \end{subfigure}
     \hfill
     \begin{subfigure}[b]{0.3\textwidth}
         \centering
         \includegraphics[width=\textwidth]{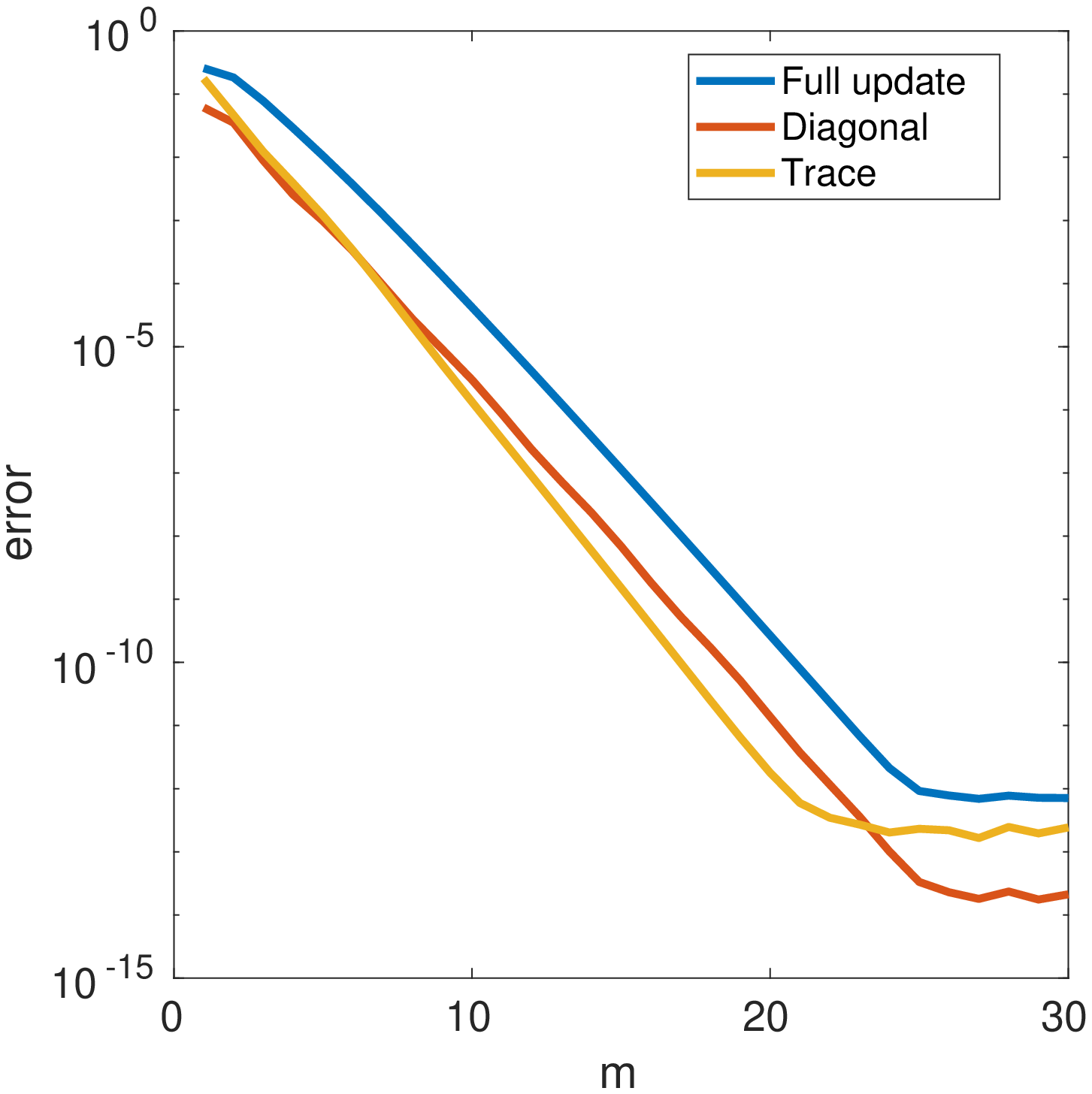}
         \caption{Rational Krylov applied to symmetric $A$ and $R$}
     \end{subfigure}
        \caption{Convergence of the errors $\|f(A+R) - f(A) - U_m X_m(f) V_m^T\|_F$,
 $\|\text{diag}(f(A+R) - f(A) - U_m X_m(f) V_m^T )\|_2$, and 
 $|\trace(f(A+R) - f(A)) - U_m X_m(f) V_m^T )|$ for $f=\exp$.}
 \label{fig:doublespeedsymmetric}
\end{figure}

\section{Divide-and-conquer for matrix functions}\label{sec:general}
\subsection{Divide-and-conquer for matrices with low-rank off-diagonal blocks}
In this section we use low-rank updates to devise a new divide-and-conquer (D\&C) method for functions of matrices that have low-rank off-diagonal blocks. More specifically, let us assume that $A$ can be block partitioned as \begin{equation}\label{eq:AdAo}
A=
\underbrace{\begin{bmatrix}
A_{11}&\\
&A_{22}
\end{bmatrix}}_{A_D}+\underbrace{\begin{bmatrix}
&A_{12}\\
A_{21}
\end{bmatrix}}_{A_O},\quad A_{11}\in\mathbb R^{n_1\times n_1},\quad A_{22}\in\mathbb R^{n_2\times n_2},
\end{equation}
where the off-diagonal part $A_O$ has low rank and the diagonal blocks can be recursively block partitioned in the same fashion. Examples of matrix structures that have this property are banded matrices and \emph{hierarchically semiseparable (HSS) matrices}~\cite{chandrasekaran2006fast}; see also Section~\ref{sec:hss} below. 

The computation of $f(A)$ is split in two tasks: computing $f(A_D)$ and computing  $f(A)-f(A_D)$. The latter quantity is approximated via Algorithm~\ref{alg:kryl} exploiting that $A_O=A-A_D$ has low rank; the former  decouples into the computation of $f(A_{11})$ and $f(A_{22})$. Since we assume that the blocks $A_{ii}$ can again be decomposed into the form~\eqref{eq:AdAo}, the described procedure is applied recursively for computing $f(A_{ii})$, $i=1,2$. Finally, when the size of a block $A_{ii}$ is below a minimal block size $n_i\leq n_{\min}$, we evaluate $f(A_{ii})$ with a standard dense method, like the scaling and squaring method~\cite{higham2008functions} for $f=\exp$.



Algorithm~\ref{alg:D&C} summarizes the described D\&C method for matrix functions.
\begin{algorithm}[ht]
	\caption{Template of D\&C algorithm for matrix functions}
	\label{alg:D&C}
	\begin{algorithmic}[1]
 		\PRINT \textsc{D\&C\_funm}($A, \mathbf{\xi}, f(z),d,\varepsilon,n_{\min}$, \texttt{flag}) \hspace{3.5cm} $A\in\mathbb R^{n\times n}$
		\IF{$n\leq n_{\min}$}
		\IF{\texttt{flag} $=$ ``full"}
		\RETURN $f(A)$\label{line:full}
				\ELSIF{\texttt{flag} $=$ ``diagonal"}
				\RETURN \texttt{diag}($f(A)$)
				\ELSIF{\texttt{flag} $=$ ``trace"}
				\STATE Compute the eigenvalues $\lambda_j$ $j=1,\dots,n$, of $A$
				\RETURN $\sum_{j=1}^mf(\lambda_j)$
				\ENDIF
		\ENDIF
		\STATE Given a decomposition~\eqref{eq:AdAo}, retrieve a low-rank factorization $A_O=B J C^T$\label{step:fact}
		\STATE $[U,X,V]\gets $\textsc{Krylov\_proj}($A_D,B,J, C,\xi,f(z),d,\varepsilon$) \hspace{3.7cm}\emph{ (Algorithm~\ref{alg:kryl})} \label{line:kryl}
		\STATE $F_{11}\gets$  \textsc{D\&C\_funm}($A_{11}, \mathbf{\xi}, f(z),d,\varepsilon,n_{\min}$, \texttt{flag}) \hspace{4.4cm} \emph{(Recursion)}\label{line:rec1}
		\STATE $F_{22}\gets$  \textsc{D\&C\_funm}($A_{22}, \mathbf{\xi}, f(z),d,\varepsilon,n_{\min}$, \texttt{flag})\hspace{4.5cm} \emph{(Recursion)}\label{line:rec2}
\IF{\texttt{flag} $=$ ``full"}
\RETURN $\left[\begin{smallmatrix}
F_{11}\\ &F_{22}
\end{smallmatrix}\right]+UXV^T$\label{line:sum}
\ELSIF{\texttt{flag} $=$ ``diagonal"}
\RETURN \label{line:diag} $\left[\begin{smallmatrix}
	F_{11}\\ F_{22}
\end{smallmatrix}\right]+$ \texttt{diag}($UXV^T$)
\ELSIF{\texttt{flag} $=$ ``trace"}
\RETURN \label{line:trace} $F_{11}+F_{22}\ +$ \texttt{trace}($V^TUX$)
\ENDIF
	\end{algorithmic}
\end{algorithm}
The D\&C method simplifies when certain selected quantities of $f(A)$, like the diagonal or the trace, are of interest. Because of linearity, it suffices to evaluate the diagonal or the trace of the low-rank update $UXV^T\approx f(A)-f(A_D)$; see lines~\ref{line:diag} and~\ref{line:trace} of Algorithm~\ref{alg:D&C}.


The exactness properties of Algorithm~\ref{alg:kryl} discussed in Section~\ref{sec:traceSymmetric} directly imply the following result.
\begin{proposition}\label{prop:exactDC}
Let $A \in \R^{n \times n}$ and consider $q_m(z) := \prod_{i=1}^m (z-\xi_i)$ for a set of $m$ poles $\xi_1, \ldots, \xi_m \in \C \cup \{\infty\}$ closed under complex conjugation. Then Algorithm~\ref{alg:D&C} applied to $A$ and a function $f \in \Pi_m / q_m$ is exact, provided that Algorithm~\ref{alg:kryl} called in Line~\ref{line:kryl} utilizes all $m$ poles.
\end{proposition}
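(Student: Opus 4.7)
The plan is to prove the proposition by induction on the recursion depth, equivalently on the matrix size $n$. For the base case $n \le n_{\min}$, Algorithm~\ref{alg:D&C} invokes a dense evaluation of $f(A)$ (or of $\diag(f(A))$ or $\trace(f(A))$), which we take to be exact.

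For the inductive step, I would assume that the two recursive calls in Lines~\ref{line:rec1}--\ref{line:rec2} produce exactly $F_{11}=f(A_{11})$ and $F_{22}=f(A_{22})$. Since $A_D$ is block diagonal with blocks $A_{11},A_{22}$, we have $f(A_D)=\bigl[\begin{smallmatrix} f(A_{11}) & \\ & f(A_{22}) \end{smallmatrix}\bigr]$, so the block-diagonal assembly of the two subproblem outputs exactly reconstructs $f(A_D)$. Next I would invoke the exactness result for rational Krylov subspaces recalled in Section~\ref{sec:traceSymmetric} (i.e.\ the extension of \cite[Theorem 3.3]{BCKS2020} stated there): since $f \in \Pi_m/q_m$ and Algorithm~\ref{alg:kryl} is called in Line~\ref{line:kryl} with all $m$ poles $\xi_1,\ldots,\xi_m$, the returned factors satisfy $UXV^T = f(A_D + A_O) - f(A_D)$ exactly. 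Adding this to $f(A_D)$ gives $f(A_D+A_O)=f(A)$, so the \texttt{flag}\,$=$\,``full'' branch in Line~\ref{line:sum} returns $f(A)$ exactly.

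The ``diagonal'' and ``trace'' branches then follow by linearity: applying $\diag(\cdot)$ or $\trace(\cdot)$ to the exact identity $f(A) = \bigl[\begin{smallmatrix}F_{11} & \\ & F_{22}\end{smallmatrix}\bigr] + UXV^T$ yields the quantities returned in Lines~\ref{line:diag} and~\ref{line:trace} respectively, where for the latter one additionally uses the cyclic identity $\trace(UXV^T)=\trace(V^TUX)$ to match the exact form that appears in the algorithm.

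There is no serious obstacle in this argument: both building blocks---the recursive computation of $f(A_D)$ and the Krylov update for $f(A)-f(A_D)$---are exact under the hypothesis $f \in \Pi_m/q_m$, and the block-diagonal structure of $A_D$ guarantees that they combine correctly according to~\eqref{eq:AdAo}. The one subtle point worth flagging is that the exactness of Algorithm~\ref{alg:kryl} requires completing the full $m$-step rational Arnoldi iteration; this is exactly what the hypothesis ``utilizes all $m$ poles'' enforces, implying in particular that the early-termination test inside Algorithm~\ref{alg:kryl} must be bypassed so that the iteration reaches step $m$ before returning.
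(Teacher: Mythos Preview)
Your argument is correct and is exactly the induction the paper has in mind; the paper merely states that the result follows directly from the exactness properties of Algorithm~\ref{alg:kryl} recalled in Section~\ref{sec:traceSymmetric}, and your write-up unpacks this into the natural recursion on the cluster tree. The handling of the ``diagonal'' and ``trace'' branches via linearity and the cyclic trace identity is likewise the intended (and only reasonable) reading.
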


\subsection{Algorithm~\ref{alg:D&C} for banded matrices}

Let us first consider the application of Algorithm~\ref{alg:D&C} to a banded matrix $A$ with bandwidth $b$, that is, $a_{ij}=0$ whenever $|i-j|>b$. Then  the off-diagonal part $A_O$ in the decomposition~\eqref{eq:AdAo} has rank at most $2b$.  Under the idealistic assumption that Algorithm~\ref{alg:kryl} converges in a constant number of iterations (independent of $n$), computing the low-rank update on the top level of recursion requires $\mathcal O(b^2n)$ operations when using  either polynomial or rational Krylov subspaces. 
Thus, the total complexity of Algorithm~\ref{alg:D&C} is  $\mathcal O(b^2 n \log n)$, provided that $n_{\min} = \mathcal O(1)$.

\begin{remark}\label{rmk:spdrankb}
By an appropriate correction of the diagonal blocks in the decomposition~\eqref{eq:AdAo}, it is possible to reduce the rank of the off-diagonal part to $b$. Although this clearly has the potential to result in lower-dimensional  Krylov subspaces in the low-rank update, it also bears the danger of leading to diagonal blocks for which $f$ is not defined or difficult to approximate. When $A$ is SPD then the rank-$b$ update can be chosen such that the diagonal blocks remain SPD~\cite[Section 4.4.2]{KressnerMasseiRobol2019}.
\end{remark}

\begin{remark}\label{rmk:bandedpolynomial}
When Algorithm~\ref{alg:D&C} is used with polynomial Krylov subspaces for banded $A$ then it can be shown that the output is again banded (but with larger bandwidth). However, in such a situation a much simpler approach is possible, which will be described in Section~\ref{sec:splitting}.
\end{remark}

\subsection{Storing the output of Algorithm~\ref{alg:D&C} using HSS matrices}\label{sec:hss}

Except for the situation described in Remark~\ref{rmk:bandedpolynomial}, 
the approximation of $f(A)$ constructed in line~\ref{line:sum} of Algorithm~\ref{alg:D&C} is not banded. To still efficiently represent this approximation, we use HSS matrices. In the following we give a brief introduction to HSS matrices; see~\cite{massei2020hm,Xia2010} 
for more details.

We start by formalizing the concept of recursive partitioning.
\begin{definition} \label{def:clustertree}
	Given $n\in \mathbb N$, let $\mathcal T_L$ be a perfect binary tree of depth $L$ whose nodes are
	subsets of $\{ 1, \ldots, n \}$. We say that $\mathcal T_L$ is
	a \emph{cluster tree} if it satisfies:
	\begin{itemize}
		\item The root is $I^{0}_1 := I = \{ 1, \ldots, n \}$.
		\item The nodes at level $\ell$, denoted by $I^{\ell}_1, \ldots, I^{\ell}_{2^\ell}$, form
		a partitioning of $\{1, \ldots, n\}$ into consecutive indices:
		\[
		I_i^\ell = \{ n^{(\ell)}_{i-1}+1\ldots, n^{(\ell)}_{i}-1, n_i^{(\ell)} \}
		\]
		for some integers $0 = n^{(\ell)}_{0} \le  n^{(\ell)}_{1} \le \cdots \le n^{(\ell)}_{2^\ell} = n$, $\ell = 0,\ldots , L$. In particular, if $n_{i-1}^{(\ell)}=n_i^{(\ell)}$ then $I_i^{\ell}=\emptyset$.
		\item The children form a partitioning of their parent. 
	\end{itemize}
\end{definition}
Usually, the cluster tree $\mathcal T_L$ is  defined such that  the index sets on the same level $\ell$ have nearly equal cardinalities and the depth of the tree is determined by a minimal diagonal block
size $n_{\mathrm{min}}$ for stopping the recursion. In particular, if $n = 2^L n_{\mathrm{min}}$, such a construction yields a
perfectly balanced binary tree of depth $L$.

The structure of an HSS matrix is determined by $\mathcal T_L$. For 
any siblings $I^\ell_i,I^\ell_j$ in $\mathcal T_L$, the corresponding off-diagonal block of $A$ is denoted by $A(I^\ell_i,I^\ell_j)$. For an HSS matrix of HSS rank $k$, every such off-diagonal block has rank at most $k$ and thus admits a factorization
\[
A(I^\ell_i,I^\ell_j) = U_i^{(\ell)} S_{i,j}^{(\ell)} (V_j^{(\ell)})^T, \quad S_{i,j}^{(\ell)} \in\mathbb R^{k\times k}, \quad U_i^{(\ell)}\in \mathbb R^{n_i^{(\ell)} \times k},\quad V_j^{(\ell)} \in \mathbb R^{n_j^{(\ell)} \times k}.
\]
Moreover, the factors $U_i^{(\ell)}, V_j^{(\ell)}$ are nested across different levels of $\mathcal T_L$ \cite{Xia2010}. More specifically, there exist so called \emph{translation operators}, $R_{U,i}^{(\ell)}, R_{V,j}^{(\ell)}\in\mathbb R^{2k\times k}$ such that
\begin{equation*}
U_i^{(\ell)} = \begin{bmatrix} U_{2i-1}^{(\ell+1)} & 0 \\ 0 & U_{2i}^{(\ell+1)} \end{bmatrix} R_{U,i}^{(\ell)}, \qquad
V_j^{(\ell)} = \begin{bmatrix} V_{2j-1}^{(\ell+1)} & 0 \\ 0 & V_{2j}^{(\ell+1)} \end{bmatrix} R_{V,j}^{(\ell)}, 
\end{equation*}
where $I_{2i-1}^{\ell+1}, I_{2i}^{\ell+1}$  and $I_{2j-1}^{\ell+1}, I_{2j}^{\ell+1}$ denote the children of $I_{i}^{\ell}$  and $I_{j}^{\ell}$, respectively. Given the bases $U_i^{(L)}$ and $V_i^{(L)}$ at the deepest level $L$, the low-rank factors $U_i^{(\ell)}$ and $V_i^{(\ell)}$ for the higher levels $\ell=1,\ldots,L-1$, can be retrieved by means of the  translation operators.
 Therefore, the representation of $A$  only requires to store: the diagonal blocks $D_i:=A(I^L_i,I^L_i)$, the bases $U_i^{(L)}$, $V_i^{(L)}$, the core factors $S_{i,j}^{(\ell)}$, $S_{j,i}^{(\ell)}$ and the translation operators $R_{U,i}^{(\ell)}$, $R_{V,i}^{(\ell)}$.  Therefore, the storage cost is $\mathcal O(kn)$. Note that we have used a uniform rank $k$ for the off-diagonal blocks to simplify the description; in practice these ranks are chosen adaptively. 
 
 In the context of Algorithm~\ref{alg:D&C}, we choose a cluster tree that aligns with  the (recursive) decompositions~\eqref{eq:AdAo}. In turn, the sum at line~\ref{line:sum} is performed using HSS arithmetic and is combined  with a re-compression step to mitigate the increase of the HSS rank. This costs $\mathcal O(k^2n)$ operations, assuming that the HSS ranks of $F_{11},F_{22}$ and the rank of $UXV^T$ are $\mathcal O(k)$~\cite{massei2020hm}. 
 
\subsection{Algorithm~\ref{alg:D&C} for HSS matrices}\label{sec:hss2}

We now discuss the situation when the HSS structure is not only used for storing the output of Algorithm~\ref{alg:D&C} but when the input matrix $A$ itself is also an HSS matrix. 
In this case the decomposition~\eqref{eq:AdAo} is aligned with the cluster tree $\mathcal T_L$ associated with $A$ as this choice guarantees that the rank of $A_O$ is bounded by $2k$ and that the outcome inherits the same cluster tree of the input matrix.
In addition, fast algorithms for matrix operations are available within the HSS format~\cite{massei2020hm}. More specifically, complexity $\mathcal O(kn)$ is achieved for the matrix-vector product; solving  linear systems and computing the corresponding matrix factorizations cost $\mathcal O(k^2n)$. Algorithm~\ref{alg:D&C} leverages these features as follows:
\begin{itemize}
	\item Retrieve the low-rank factorization at line~\ref{step:fact} by means of the translation operators ($\mathcal O(k^2n)$).
	\item Generate the Krylov subspaces in \textsc{Krylov\_proj} by  performing matrix-vector products and/or solving shifted linear systems with HSS algorithms.
	\item Use the HSS structures of $A_{11},A_{22}$ in the recursive calls at lines~\ref{line:rec1}-\ref{line:rec2} and return HSS matrices $F_{11}$ and $F_{22}$.
	
\end{itemize}
Let us analyze the cost of Algorithm~\ref{alg:D&C} for the input $(\mathcal T_L,k)$-HSS matrix $A$, with $L=\mathcal O(\log(n))$, and \texttt{flag} equals ``full''.
We again make the idealistic assumption that \textsc{Krylov\_proj} converges in a constant number of iterations, independent of $k$ and $n$, and that the outcome of the (compressed) sum at line~\ref{line:sum} has always HSS rank $\mathcal O(k)$. Then, we have that the low-rank updates at level $\ell\in\{0,1,\dots,L-1\}$ cost $\mathcal O(k^2(n_i^{(\ell)}-n_{i-1}^{(\ell)}))$, $i=1,\dots,2^\ell$, when using either polynomial or rational Krylov subspaces. Since the sum at line~\ref{line:sum} costs $\mathcal O(k^2(n_i^{(\ell)}-n_{i-1}^{(\ell)}))$ too, the  asymptotic complexity of each non base level of the recursion is $\mathcal O(k^2\sum_{i=1}^{2^\ell}(n_i^{(\ell)}-n_{i-1}^{(\ell)}))=\mathcal O(k^2n)$. The base of the recursion requires to evaluate $\mathcal O(n/n_{min})$ functions of matrices of size at most $n_{\min}\times n_{\min}$; assuming a cubic cost for matrix function evaluations yields $\mathcal O(n_{\min}^2n)$. Hence, the overall complexity of Algorithm~\ref{alg:D&C} is $\mathcal O(k^2n\log(n))$.  
	\subsection{Convergence results for D\&C algorithm}

Convergence results for Algorithm~\ref{alg:D&C} can be obtained from the convergence results on low-rank updates of matrix functions discussed in Section~\ref{sec:traceSymmetric}.
In the following, we let $\mathcal{T}_L$ denote the (perfect binary) tree of depth $L$ associated with the recursive decompositions performed in line~\ref{step:fact}.
\begin{theorem}\label{thm:convDC}
 Let $A$ be symmetric and let $f$ be a function analytic on an interval $\mathbb{E}$ containing the eigenvalues of $A$. Suppose that Algorithm~\ref{alg:D&C}  uses  rational Krylov subspaces with poles $\xi_1, \ldots, \xi_m$, closed under complex conjugation, for computing updates.  
 Then the output $F_A$ of Algorithm~\ref{alg:D&C} satisfies
 \begin{equation*}
  \| f(A) - F_A \|_2 \le 4 L \cdot \min_{r \in \Pi_m / q_m} \| f - r \|_{\mathbb{E}},
 \end{equation*}
 where $q_m(z) = \prod_{i=1}^m (z - \xi_i)$.
\end{theorem}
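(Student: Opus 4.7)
The plan is to prove the theorem by induction on the depth $L$ of the recursion tree $\mathcal{T}_L$. In the base case $L=0$, Algorithm~\ref{alg:D&C} returns $f(A)$ exactly (line~\ref{line:full}), so the inequality is trivial. For the inductive step, write the output as $F_A = \mathrm{diag}(F_{11},F_{22}) + UXV^T$ and split
\begin{equation*}
 f(A) - F_A \;=\; \bigl[f(A) - f(A_D) - UXV^T\bigr] \;+\; \mathrm{diag}\bigl(f(A_{11}) - F_{11},\, f(A_{22}) - F_{22}\bigr).
\end{equation*}
The spectral norm of the block-diagonal remainder equals $\max_{i=1,2}\|f(A_{ii})-F_{ii}\|_2$. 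Cauchy's interlacing theorem ensures that the eigenvalues of the symmetric principal submatrices $A_{ii}$ lie inside $[\lambda_{\min}(A),\lambda_{\max}(A)] \subseteq \mathbb{E}$, so $f$ is analytic on the spectrum of each $A_{ii}$ and the inductive hypothesis applies to a tree of depth $L-1$, yielding the bound $4(L-1)\cdot \min_{r\in\Pi_m/q_m}\|f-r\|_{\mathbb{E}}$.

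The crux is therefore to show that the low-rank update error satisfies
\begin{equation*}
 \|f(A) - f(A_D) - UX(f)V^T\|_2 \;\le\; 4\,\min_{r\in\Pi_m/q_m}\|f-r\|_{\mathbb{E}}.
\end{equation*}
I would exploit the exactness result, Proposition~\ref{prop:exactDC} (applied at the level of Algorithm~\ref{alg:kryl}): for every $r \in \Pi_m/q_m$, Algorithm~\ref{alg:kryl} reproduces $r(A)-r(A_D)$ exactly, i.e.\ $UX(r)V^T = r(A)-r(A_D)$. Since $X(\cdot)$ is linear in its scalar argument, subtracting gives
\begin{equation*}
 f(A) - f(A_D) - UX(f)V^T \;=\; (f-r)(A) - (f-r)(A_D) - UX(f-r)V^T.
\end{equation*}

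Because $A$ and $A_O$ are symmetric, we can take $C=B$, hence $U=V$, and in the symmetric branch of Algorithm~\ref{alg:kryl} one has $X(g) = g(U^T A U) - g(U^T A_D U)$ for any $g$. Applying this to $g=f-r$ and using the triangle inequality yields four terms, each of the form $\|(f-r)(M)\|_2$ for a symmetric matrix $M \in \{A,\ A_D,\ U^T A U,\ U^T A_D U\}$. By Cauchy interlacing, the spectrum of each such $M$ is contained in $\mathbb{E}$, and for a symmetric matrix $\|(f-r)(M)\|_2 \le \|f-r\|_{\Lambda(M)} \le \|f-r\|_{\mathbb{E}}$. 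Summing gives the factor $4$. Minimising over $r \in \Pi_m/q_m$ and combining with the inductive bound produces $4\cdot \min\|f-r\|_{\mathbb{E}} + 4(L-1)\cdot \min\|f-r\|_{\mathbb{E}} = 4L\cdot \min\|f-r\|_{\mathbb{E}}$, as claimed.

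The main technical point to be careful about is the reduction to the symmetric form $X(g) = g(U^T A U) - g(U^T A_D U)$, since the general (nonsymmetric) formula involving the $(1,2)$ block of a block triangular matrix function does not admit a clean $\|\cdot\|_2$ bound through $\|g\|_{\mathbb{E}}$. Once the symmetric reduction is in place, the remainder is a routine application of Cauchy interlacing together with the spectral-norm bound $\|g(M)\|_2 \le \|g\|_{\Lambda(M)}$ for normal $M$, and the accumulation of errors over $L$ levels of recursion.
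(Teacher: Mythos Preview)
Your proof is correct and follows essentially the same route as the paper: induction on the depth $L$, splitting $f(A)-F_A$ into the top-level update error plus the block-diagonal remainder, bounding the latter by the inductive hypothesis, and bounding the former by $4\min_{r\in\Pi_m/q_m}\|f-r\|_{\mathbb{E}}$ via exactness of the projection for $r\in\Pi_m/q_m$ together with Cauchy interlacing. The only cosmetic difference is that the paper first isolates the telescoping bound~\eqref{eq:levels} and then invokes \cite[Theorem~4.5]{BCKS2020} for the per-level factor~$4$, whereas you reconstruct that factor-$4$ argument explicitly using the symmetric form $X(g)=g(U^TAU)-g(U^TA_DU)$.
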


\begin{proof}
 Using the index sets contained in $\mathcal{T}_L$ (see Definition~\ref{def:clustertree}), the matrices to which Algorithm~\ref{alg:D&C} is applied to in the $\ell$th level of recursion are denoted by $A_j^{\ell} := A(I_j^{\ell},I_j^{\ell})$ for $\ell < L$. Analogously, we let $G_j^{\ell}$ denote the update of the form $UXV^T$ computed in line~\ref{line:kryl}. 
 We aim at proving the following bound for the error of Algorithm~\ref{alg:D&C}:
 \begin{equation}\label{eq:levels}
  \|f(A) - F_A\|_2 \le \sum_{\ell = 0}^{L-1} \max_{j=1,\ldots,2^{\ell}} \left \| f(A^{\ell}_j) - \begin{bmatrix}f(A^{\ell+1}_{2j-1}) & \\ & f(A_{2j}^{\ell+1}) \end{bmatrix} - G^{\ell}_j \right \|_2.
 \end{equation}
 This bound implies the statement of the theorem because by~\cite[Theorem 4.5]{BCKS2020} each term appearing in the sum can be bounded by
\begin{equation*}
\left \| f(A^{\ell}_j) - \begin{bmatrix}f(A_{2j-1}^{\ell+1}) & \\ & f(A_{2j}^{\ell+1}) \end{bmatrix} - G^{\ell}_j \right \|_2 \le 4 \min_{r \in \Pi_m / q_m} \|f - r\|_{\mathbb{E}},
\end{equation*}
where we used that the eigenvalues of principal submatrices of $A$ are contained in $\mathbb{E}$.

The proof of~\eqref{eq:levels} is by induction on $L$, the number of levels. When $L = 1$, the definition of $F_A$ yields
\begin{equation*}
    \|f(A) - F_A \|_2 = \left \| f(A_1^0) - \begin{bmatrix} f(A_1^1) & \\ & f(A_2^1) \end{bmatrix} - G_1^0 \right \|_2.
\end{equation*}
Now, suppose that~\eqref{eq:levels} holds for $L-1$. Then the result for $L \ge 2$ is proven by observing 
\begin{align*}
 \| f(A) - F_A \|_2 & = \left \lVert f(A_1^0) - \left ( \begin{bmatrix} F_{A^{1}_{1}} & \\ & F_{A^{1}_{2}} \end{bmatrix} + G_1^0 \right ) \right \rVert_2 \\
 & = \left \lVert f(A_1^0) - \begin{bmatrix} f(A^{1}_{1}) & \\ & f(A^{1}_{2}) \end{bmatrix} - G_1^0 + \begin{bmatrix} f(A^{1}_{1}) & \\ & f(A^{1}_{2}) \end{bmatrix} - \begin{bmatrix} F_{A^{1}_{1}} & \\ & F_{A^{1}_{2}} \end{bmatrix} \right \rVert_2\\
 & \le \left \lVert f(A_1^0) - \begin{bmatrix} f(A^{1}_{1}) & \\ & f(A^{1}_{2}) \end{bmatrix} - G_1^0 \right \rVert_2 + \left \lVert\begin{bmatrix} f(A^{1}_{1}) - F_{A^{1}_{1}} & \\ & f(A^{1}_{2}) - F_{A^{1}_{2}} \end{bmatrix} \right \rVert_2\\
 & = \left \lVert f(A_1^0) - \begin{bmatrix} f(A^{1}_{1}) & \\ & f(A^{1}_{2}) \end{bmatrix} - G_1^0 \right \rVert_2 + \max_{k \in \{1,2\}} \| f(A^{1}_{k}) - F_{A^{1}_{k}}\|_2.
\end{align*}
Each of the terms $\|f(A^{1}_{k}) - F_{A^{1}_{k}}\|_2$ corresponds to applying Algorithm~\ref{alg:D&C} with a cluster tree of depth $L-1$, for which~\eqref{eq:levels} holds by the induction assumption; therefore,~\eqref{eq:levels} also holds for $L$. 
\end{proof}

\begin{corollary}
 Under the assumptions of Theorem~\ref{thm:convDC}, when using polynomial Krylov subspaces in Algorithm~\ref{alg:kryl}, we have that
 \begin{equation*}
 |\mathrm{trace}(f(A)) - \mathrm{trace}(F_A)| \le 4nL \min_{p \in \Pi_{2m}} \|f - p \|_{\mathbb{E}}.
 \end{equation*}
\end{corollary}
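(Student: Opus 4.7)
The plan is to mirror the proof of Theorem~\ref{thm:convDC}, but to replace the spectral-norm telescoping with a trace-level telescoping, so that the sharper per-block bound from Theorem~\ref{thm:convTraceUpdate} can be applied. First I would fix the notation used in that proof: let $A_j^{\ell} = A(I_j^{\ell}, I_j^{\ell})$ denote the diagonal blocks encountered at level $\ell$ of the recursion and let $G_j^{\ell} = UXV^T$ be the low-rank update computed in line~\ref{line:kryl} at the corresponding call of Algorithm~\ref{alg:D&C}. Since Algorithm~\ref{alg:D&C} with \texttt{flag} equal to ``full'' or ``trace'' returns the same trace value (by linearity of the trace), we can work with the full output $F_A$ and take its trace at the end.

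Next I would establish, by induction on $L$ exactly as in the proof of Theorem~\ref{thm:convDC}, the trace analog of the bound~\eqref{eq:levels}:
\begin{equation*}
|\trace(f(A)) - \trace(F_A)| \le \sum_{\ell=0}^{L-1} \sum_{j=1}^{2^{\ell}} \left| \trace(f(A_j^{\ell})) - \trace(f(A_{2j-1}^{\ell+1})) - \trace(f(A_{2j}^{\ell+1})) - \trace(G_j^{\ell}) \right|.
\end{equation*}
The induction step is strictly easier than in Theorem~\ref{thm:convDC}: instead of using $\|\mathrm{diag}(X_1,X_2)\|_2 = \max_k \|X_k\|_2$, one uses $|\trace(\mathrm{diag}(X_1,X_2))| \le |\trace(X_1)| + |\trace(X_2)|$, which is why the bound sums over $j$ rather than taking a maximum.

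The key step is then to apply Theorem~\ref{thm:convTraceUpdate} to each summand. The block $A_j^{\ell}$ has size $n_j^{\ell} := |I_j^{\ell}|$ and is partitioned as in~\eqref{eq:AdAo} with $A_D = \mathrm{diag}(A_{2j-1}^{\ell+1}, A_{2j}^{\ell+1})$ and a symmetric low-rank correction $A_j^{\ell} - A_D = B_j^{\ell} J_j^{\ell} (B_j^{\ell})^T$. Since Algorithm~\ref{alg:kryl} uses a polynomial Krylov subspace, Theorem~\ref{thm:convTraceUpdate} applied to this block, together with the fact that its eigenvalues lie in $\mathbb E$ (as eigenvalues of a principal submatrix of $A$), yields
\begin{equation*}
\left| \trace(f(A_j^{\ell})) - \trace(f(A_{2j-1}^{\ell+1})) - \trace(f(A_{2j}^{\ell+1})) - \trace(G_j^{\ell}) \right| \le 4 n_j^{\ell} \min_{p \in \Pi_{2m}} \|f - p\|_{\mathbb E}.
\end{equation*}

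Finally, summing over $j$ at a fixed level $\ell$ gives $\sum_{j=1}^{2^{\ell}} 4 n_j^{\ell} \min_{p \in \Pi_{2m}} \|f - p\|_{\mathbb E} = 4n \min_{p \in \Pi_{2m}} \|f - p\|_{\mathbb E}$, since the sets $\{I_j^{\ell}\}_j$ partition $\{1,\dots,n\}$. Summing the resulting bound over the $L$ non-base levels yields the claimed factor $4nL$. The only minor obstacle is to verify in the induction step that the trace-telescoping really collapses correctly (with $|\cdot|$ inside the sums), but this is immediate from linearity of the trace and the triangle inequality, and no further subtlety arises.
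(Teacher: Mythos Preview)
Your proposal is correct and follows essentially the same approach as the paper: the paper's proof simply states the trace-level telescoping bound (your displayed inequality with the double sum over $\ell$ and $j$) ``analogously to the proof of Theorem~\ref{thm:convDC}'' and then invokes Theorem~\ref{thm:convTraceUpdate} to conclude. You have supplied the details the paper omits---the induction for the telescoping, the identification of each block update with the setting of Theorem~\ref{thm:convTraceUpdate}, and the observation that $\sum_{j} n_j^{\ell} = n$ turns the per-level sum into $4n$---so your write-up is in fact more explicit than the paper's, but the route is the same.
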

\begin{proof}
 Analogously to the proof of Theorem~\ref{thm:convDC}, we can bound
 \begin{equation*}
  |\mathrm{trace}(f(A)) - \mathrm{trace}(F_A)| \le \sum_{\ell = 0}^{L-1} \sum_{j=1}^{2^{\ell}} \left | \mathrm{trace}(f(A_j^{\ell})) - \mathrm{trace}\begin{bmatrix}f(A_{2j-1}^{\ell}) & \\ & f(A_{2j}^{\ell}) \end{bmatrix} - \mathrm{trace}(G_j^{\ell}) \right |
 \end{equation*}
and use Theorem~\ref{thm:convTraceUpdate} to conclude.
\end{proof}

	\section{Numerical tests for Algorithm~\ref{alg:D&C}}\label{sec:testDC}

In this section we test Algorithm~\ref{alg:D&C} on a variety of matrices and functions coming from different applications.  The minimum block size parameter $n_{\min}$ is set to $256$ for all our experiments, and the tolerance is $\varepsilon = 10^{-8}$ for all experiments, unless otherwise noted. The lag parameter in Algorithm~\ref{alg:kryl} is set to $d = 1$. The algorithm has been implemented in Matlab, version 9.9 (R2020b), and all numerical experiments in this work have been run on an eight-core Intel Core i7-8650U 1.90 GHz CPU, with 256 KB of level 2 Cache and 16 GB of RAM. The code for reproducing the experiments in this section and in Section~\ref{sec:examplepoly} is available at~\url{https://github.com/Alice94/MatrixFunctions-Banded-HSS}. The computations with HSS matrices have been performed using the hm-toolbox~\cite{massei2020hm}. This requires choosing a minimum block size and a tolerance parameter, which we set to be equal to $n_{\min}$ and $\varepsilon$, respectively. 

In all tables referring to the computation of matrix functions $f(A)$ the columns denoted by ``Err" contain the relative error in the Frobenius norm computed -- whenever possible -- with respect to the value of $f(A)$ obtained by dense arithmetic.

\subsection{Space-fractional diffusion equation without source}\label{sec:fractional}
Let us consider the fractional diffusion problem:
\[
\begin{cases}\frac{\partial u(x,t)}{\partial t}=\frac{\partial^\alpha u(x,t)}{\partial_- x^\alpha}+\frac{\partial^\alpha u(x,t)}{\partial_+ x^\alpha}&(x,t)\in (0,1)\times (0,T]\\
u(x,t)=0&(x,t)\in (\mathbb R\setminus[0,1]) \times [0,T]\\
u(x,0) = u_0(x)&x\in[0,1]
\end{cases}
\]
where $\alpha\in (1,2)$ is a fractional order of derivation and $\frac{\partial^{\alpha}}{\partial_-x^\alpha }, \frac{\partial^{\alpha}}{\partial_+x^\alpha }$ denote the left looking and right looking $\alpha$th derivatives. Discretizing in space by means of the finite difference scheme based on Gr\"unwald-Letnikov formulas, with step size $\Delta x=\frac{1}{n+1}$, yields
\[
\begin{cases}\dot{\mathbf u}(t)= A\mathbf u(t)\\
\mathbf u(0)=\mathbf{u_0}
\end{cases},\qquad A=T_n+T_n^T, \qquad T_n=\frac{1}{\Delta x^{\alpha}}\begin{bmatrix}
g_1^{(\alpha)} &g_0^{(\alpha)} &0&\dots&0&0\\
g_2^{(\alpha)} &g_1^{(\alpha)} &g_0^{(\alpha)}&0&\dots&0\\
\vdots&\ddots&\ddots&\ddots&\ddots&\vdots\\
\vdots&\ddots&\ddots&\ddots&\ddots&0\\
g_{n-1}^{(\alpha)}&\ddots&\ddots&\ddots&g_1^{(\alpha)}&g_0^{(\alpha)}\\
g_{n}^{(\alpha)}&g_{n-1}^{(\alpha)}&\dots&\dots&g_2^{(\alpha)}&g_1^{(\alpha)}
\end{bmatrix},
\]
where 
\[
g_0^{(\alpha)}=-1,\qquad g_k^{(\alpha)}=\frac{(-1)^{k+1}}{k!}\alpha(\alpha-1)\cdots(\alpha-k+1), \quad k=1,\dots,n,
\]
 and $\mathbf u(t),\mathbf{u_0}\in\mathbb R^n$ contain the sampling of the solution and of the boundary condition, respectively, at the spatial points $j\Delta x$, for $j=1,\dots,n$. In particular, evaluating the solution at time $t=1$ as $\mathbf u(1)=e^{A}\mathbf{u_0}$ requires the computation of the matrix exponential of $A$ which is well approximated in the HSS format \cite{massei2019fast}. 
 
 Concerning the latter task, we compare the performances of our D\&C method (Algorithm~\ref{alg:D&C}) with polynomial Krylov subspaces and of the function \texttt{expm} of the \texttt{hm-toolbox} that makes use of a Pad\'e approximant combined with scaling and squaring. 
 
 The results are reported in Table~\ref{tab:fractional}. 
 The column labeled as ``Dense" corresponds to the evaluation of the matrix exponential with dense arithmetic via Matlab's \texttt{expm} function. This has been computed up to size $n=8192$ and demonstrates that D\&C is slightly more accurate; \texttt{expm} (HSS) and D\&C are cheaper than the dense method from sizes $4096$ and $2048$, respectively.
  \begin{table}[htb]
 	\centering
 	\pgfplotstabletypeset[
 	every head row/.style={
 		before row={
 			\toprule
 			\multicolumn{2}{c|}{$A$} &\multicolumn{2}{c|}{D\&C}
 			&\multicolumn{2}{c|}{\texttt{expm} (HSS)}& Dense&$e^A$ \\
 		},
 		after row = \midrule,
 	},
 	every last row/.style={ after row=\bottomrule},
 	highlightcell/.style={
        postproc cell content/.append code={
          \ifnum\pgfplotstablerow=#1\relax%
              \pgfkeysalso{@cell content/.add={$\bf}{$}}
          \fi
        }
    },
 	columns = {0,5,2,4,1,3,7,6},
 	columns/0/.style = {column name = Size},
 	columns/5/.style = {column name = HSS rank, column type=c|},
 	columns/1/.style = {column name = Time, fixed  },
 	columns/3/.style = {column name = Err, column type=c|, skip rows between index={5}{7}},
 	columns/2/.style = {column name = Time, fixed, highlightcell={2}, highlightcell={3}, highlightcell={4}, highlightcell={5}, highlightcell={6} },
 	columns/4/.style = {column name = Err, skip rows between index={5}{7}, column type =c|},
 	columns/7/.style = {column name = Time, skip rows between index={5}{7}, fixed, highlightcell={0}, highlightcell={1}},
 	columns/6/.style = {column name = HSS rank, column type=|c},
 	]{data/testfractional.dat}
 	\caption{Computation of $e^A$ in the HSS format for the coefficient matrix $A$ of the fractional diffusion problem discussed in Section~\ref{sec:fractional}. We compare the performances of the \texttt{expm} function of the \texttt{hm-toolbox}~\cite{massei2020hm} and of the D\&C approach proposed in Algorithm~\ref{alg:D&C}.}
 	\label{tab:fractional}
 \end{table}

\subsection{Sampling from a Gaussian Markov random field}\label{sec:sampling}
This case study, taken from \cite{ilic2010restarted}, arises from computational statistics and it concerns a tool often used to model spatially structured uncertainty in the data. Given a cloud of points $\{s_i\}_{i=1}^n\subset\mathbb R^d$ we introduce Gaussian random variables $x_i$ $i=1,\dots,n$ at each point.  The vector $ x=(x_i)$  is referred to as a \emph{Gaussian Markov random field (GMRF)} when it is distributed according to the precision (inverse covariance) matrix $A=(a_{ij})\in\mathbb R^{n\times n}$ depending on two positive parameters $\phi$ and $\delta$ as follows:
\[
a_{ij} = \begin{cases}
1+\phi\cdot \sum\limits_{k=1,k\neq i}^n\chi_{ki}^\delta &\text{if $i= j$}\\
-\phi \cdot\chi_{ij}^\delta&\text{if $i\neq j$}
\end{cases},\qquad 
\text{where } \chi_{ij}^\delta =\begin{cases}
1&\text{if } ||s_i-s_j||_2< \delta\\
0&\text{otherwise}
\end{cases}.
\]
A sample $ v\in\mathbb R^n$ from  a  zero-mean  GMRF  with  precision  matrix $A$ is obtained as   $ v=A^{-\frac 12} z$, where $ z$ is a vector of independently and identically distributed standard normal random variables. 

When many samples are needed, it is convenient to store an HSS representation of $A^{-\frac 12}$ so that each sample requires only a matrix vector product with an HSS matrix. In this experiment  we set $\phi =3$, we generate $n=2^j$  pseudorandom points $s_i$ in the unit interval $(0,1)$, and we choose $\delta =0.02\cdot 2^{9-j}$ for $j=9,\dots,18$. Sorting the points $s_i$ yields  precision matrices  that are symmetric, diagonally dominant and with bandwidth in the range $[19,26]$.

As suggested in Remark~\ref{rmk:spdrankb}, as the matrix $A$ is banded and SPD we use a decomposition which features rank-$b$ updates; we observed a speed up with respect to doing rank-$2b$ updates in our experiments. 
In Algorithm~\ref{alg:kryl} the projection method used for computing the updates in the D\&C is the \emph{Extended Krylov method}, which alternates poles $0$ and $\infty$\footnote{More precisely, the $m$th extended Krylov subspace associated to a matrix $A$ and a (block) vector $B$ is 
$A^{-m}\mathcal{K}_{2m}(A, B) := \mathrm{span}\big[B, A^{-1}B, AB, \ldots, A^{m-1}B,A^{-m}B\big]$.}. We compare the computation of $A^{-\frac 12}$ in the HSS  format by means of our D\&C scheme with the function \texttt{sqrtm} contained in the \texttt{hm-toolbox}~\cite{massei2020hm} which combines the Denman and Beavers iteration with the HSS arithmetic.
 
 The results reported in Table~\ref{tab:sampling} show that the D\&C approach yields a significant reduction of the computational time with respect to \texttt{sqrtm} (HSS). For the largest instance, $n= 32, 768$,  we have profiled the computing time spent  at  the  different  stages  of  the  D\&C  method. The generation of the bases of the extended Krylov subspaces consumed about $25$\% of the total time while about $50$\% was spent to sum the (low-rank) updates to the block diagonal intermediate results. Around $20$\% was used for computing the projected matrices and evaluating the inverse square roots of the diagonal blocks at the lowest level of recursion and of the projected matrices. 
 
 \begin{table}[htb]
 	\centering
 			\pgfplotstabletypeset[
 		every head row/.style={
 			before row={
 				\toprule
 				\multicolumn{2}{c|}{$A$} &\multicolumn{2}{c|}{D\&C}
 		&\multicolumn{2}{c|}{\texttt{sqrtm} (HSS)}& Dense &$A^{-\frac12}$ \\
 			},
 			after row = \midrule,
 		},
 		every last row/.style={ after row=\bottomrule},
        highlightcell/.style={
            postproc cell content/.append code={
              \ifnum\pgfplotstablerow=#1\relax%
                  \pgfkeysalso{@cell content/.add={$\bf}{$}}
              \fi
            }
        },
 		columns = {0,5,2,4,1,3,8,7},
 		columns/0/.style = {column name = Size, fixed},
 		columns/5/.style = {column name = Band, column type=c|},
 		columns/1/.style = {column name = Time, fixed },
 		columns/3/.style = {column name = Err, column type=c|, skip rows between index={5}{10}},
 		columns/2/.style = {column name = Time, fixed,  highlightcell={2}, highlightcell={3}, highlightcell={4}, highlightcell={5}, highlightcell={6}, highlightcell={7}, highlightcell={8}, highlightcell={9} },
 		columns/4/.style = {column name = Err, skip rows between index={5}{10}, column type = c|},
 		columns/8/.style = {column name = Time, skip rows between index={5}{10}, fixed, zerofill, highlightcell={0}, highlightcell={1}},
 		columns/7/.style = {column name = HSS rank, column type=|c},
 		]{data/testsampling.dat}
 	\caption{Computation of $A^{-\frac 12}$ in the HSS format for the precision matrix $A$ of the Gaussian Markov random field discussed in Section~\ref{sec:sampling}. We compare the performances of the \texttt{sqrtm} function of the \texttt{hm-toolbox}~\cite{massei2020hm} and of the D\&C approach proposed in Algorithm~\ref{alg:D&C}.}
 	\label{tab:sampling}
 \end{table}

 \subsection{Merton model for option pricing}\label{sec:toeplitz}
 We consider the evaluation of option prices in the Merton model for one single underlying asset, as in~\cite[Section 6.3]{KressnerLuce2018}.  More specifically, we compute the exponential of the nonsymmetric Toeplitz matrix $A$ arising from the discretization of the partial integro-differential equation
 \begin{equation*}
  \omega_t = \frac{\nu^2}{2}\omega_{\xi\xi} + \left ( r - \lambda\kappa - \frac{\nu^2}{2} \right ) \omega_{\xi} - (r + \lambda)\omega + \lambda\int_{-\infty}^{+\infty}\omega(\xi + \eta, t)\phi(\eta)\mathrm{d}\eta,
 \end{equation*}
 where $\omega(\xi, t)$ on $(-\infty,+\infty) \times [0, T]$ is the option value, $T$ is the time to maturity, $\nu \ge 0$ is the volatility, $r$ is the risk-free interest rate, $\lambda \ge 0$ is the arrival intensity of a Poisson process, $\phi$ is the normal distribution with mean $\mu$ and standard deviation $\sigma$, and $\kappa = e^{\mu + \sigma^2/2}-1$. We use the same discretization and parameters as~\cite[Section 6.3]{KressnerLuce2018} and~\cite[Example 3]{Lee2010}.
 

We aim at approximating $\exp(A)$, for different values of the matrix size $n$. To do so, we first convert $A$ into HSS format using the hm-toolbox~\cite{massei2020hm}, then rescale it by dividing by $ 2^{\left \lceil \log_2 \|H\|_2 \right \rceil}$, then apply Algorithm~\ref{alg:D&C}, and finally squaring the result $ \left \lceil \log_2 \|H\|_2 \right \rceil$ times in the HSS format. We use 
polynomial Krylov subspaces for the updates in Algorithm~\ref{alg:D&C}. For different values of $n$, we compare the output of the described method with the \texttt{expm} algorithm from the hm-toolbox~\cite{massei2020hm} and the algorithm~\texttt{sexpmt} proposed in~\cite{KressnerLuce2018}. In order to attain a similar accuracy to the \texttt{sexpmt} algorithm, we set the tolerance parameter $\varepsilon = 10^{-12}$ in Algorithm~\ref{alg:D&C} and for HSS computations in the hm-toolbox~\cite{massei2020hm}. The results are summarized in Table~\ref{tab:toeplitz}. 
 
 \begin{table}[ht]
 	\centering
 	\pgfplotstabletypeset[
 	every head row/.style={
 		before row={
 			\toprule
 			{$A$} &\multicolumn{2}{c|}{D\&C}
 			&\multicolumn{2}{c|}{\texttt{expm} (HSS)}& \multicolumn{2}{c|}{\texttt{sexpmt}}&Dense&$e^A$ \\
 		},
 		after row = \midrule,
 	},
 	every last row/.style={ after row=\bottomrule},
 	highlightcell/.style={
        postproc cell content/.append code={
          \ifnum\pgfplotstablerow=#1\relax%
              \pgfkeysalso{@cell content/.add={$\bf}{$}}
          \fi
        }
    },
 	columns = {0,3,4,1,2,5,6,8,7},
 	columns/0/.style = {column name = Size, column type = c|},
 	columns/1/.style = {column name = Time, fixed },
 	columns/2/.style = {column name = Err, column type=c|, skip rows between index={5}{7}},
 	columns/3/.style = {column name = Time, fixed, highlightcell={3}, highlightcell={4}, highlightcell={5}, highlightcell={6} },
 	columns/4/.style = {column name = Err, skip rows between index={5}{7}, column type =c|},
 	columns/5/.style = {column name = Time, fixed, highlightcell={1}, highlightcell={2} },
 	columns/6/.style = {column name = Err, skip rows between index={5}{7}, column type =c|},
 	columns/8/.style = {column name = Time, skip rows between index={5}{7}, fixed, highlightcell={0}},
 	columns/7/.style = {column name = HSS rank, column type=|c},
 	]{data/testToeplitz.dat}
 	\caption{Computation of $e^A$ in the HSS format for the coefficient matrix $A$ in Section~\ref{sec:toeplitz}. We compare the performances of our Algorithm~\ref{alg:D&C} with the \texttt{expm} function of the \texttt{hm-toolbox}~\cite{massei2020hm} and the \texttt{sexpmt} algorithm of~\cite{KressnerLuce2018}.}
 	\label{tab:toeplitz}
 \end{table}
 
  \subsection{Neumann-to-Dirichlet operator}\label{sec:NtD}
  Consider 
  \begin{equation}\label{eq:ntd}
   \frac{\partial^2}{\partial x^2}u = Au, \qquad \frac{\partial}{\partial x} u \mid_{x=0} = -b, \qquad u\mid_{x = + \infty}
  \end{equation}
  for a nonsingular matrix $A$ which is the discretization of a differential operator on some spatial domain $\Omega \subseteq \R^{\ell}$. Then~\eqref{eq:ntd} is a semidiscretization of an $(\ell+1)$-dimensional PDE on $[0, +\infty) \times \Omega$; the solution is given by $u(x) = \exp \left ( - xA^{-1/2}\right ) A^{-1/2}b$. In particular, $u(0) = A^{-1/2}b$ and the operator $A^{-1/2}$ is called \emph{Neumann-to-Dirichlet} (NtD) operator as it allows for conversion of the Neumann data $-b$ at the boundary $x = 0$ into the Dirichlet data $u(0)$, without needing to solve~\eqref{eq:ntd} on its unbounded domain.

  As in~\cite[Example 6.1]{Druskin2016}, we consider the inhomogeneous Helmholtz equation
  \begin{equation}\label{eq:exampleNtD}
   \Delta u(x, y) + k^2 u(x, y) = f(x, y), \quad f(x, y) = 10\delta(x - 511\pi / 512) \delta(y-50\pi/512)
  \end{equation}
  for $k = 50$ on the domain $[0, \pi]^2$. The matrix $A$ corresponds to the discretization of $-\frac{\partial^2}{\partial y^2} - k^2$ on $[0, \pi]$ by central finite differences. We consider step sizes $ h \in \{\pi/2^9, \ldots, \pi/2^{15}\}$ and compute the NtD operator $A^{-1/2}$ in the HSS format using the D\&C algorithm~\ref{alg:D&C}; 
  Table~\ref{tab:NtD} illustrates the comparison with the computation of $A^{-1/2}$ in dense arithmetic. For computing the inverse square root, we move the branch cut to the negative imaginary axis. For the updates, we use the complex extension of Algorithm~\ref{alg:kryl} with rational Krylov subspaces where we cyclically repeat $6$ poles coming from the degree-6 approximation to $f(z) = z^{-1/2}$ on the set $ S := [-b, -a] \cup [a, b]$ for $b = \|A\|_2$ (estimated with \texttt{normest(A)}) and $a = 1/\|A^{-1}\|_2$ (computed via \texttt{b / condest(A)}) described in~\cite[Section 2]{Druskin2016}. As the spectral interval of $A$ contains zero, which is a singularity of the inverse square root function, we could potentially encounter instability issues when using Krylov subspace methods; however, this does not happen in our example. 
  
 
  \begin{table}[ht]
 	\centering
 	\pgfplotstabletypeset[
 	every head row/.style={
 		before row={
 			\toprule
 			{$A$} 
 			&\multicolumn{2}{c|}{D\&C}& Dense&$A^{-1/2}$ \\
 		},
 		after row = \midrule,
 	},
 	every last row/.style={ after row=\bottomrule},
 	highlightcell/.style={
        postproc cell content/.append code={
          \ifnum\pgfplotstablerow=#1\relax%
              \pgfkeysalso{@cell content/.add={$\bf}{$}}
          \fi
        }
    },
 	columns = {0,1,2,3,4},
 	columns/0/.style = {column name = Size, column type = c|},
 	columns/1/.style = {column name = Time, fixed , highlightcell={3}, highlightcell={4}, highlightcell={5}, highlightcell={6}},
 	columns/2/.style = {column name = Err, column type=c|, skip rows between index={5}{7}},
 	columns/3/.style = {column name = Time, skip rows between index={5}{7}, fixed, highlightcell={0}, highlightcell={1}, highlightcell={2}},
 	columns/4/.style = {column name = HSS rank, column type=|c},
 	]{data/testNtD.dat}
 	\caption{Computation of $A^{-1/2}$ in the HSS format for Neumann-to-Dirichlet problem discussed in Section~\ref{sec:NtD}.}
 	\label{tab:NtD}
 \end{table}
 
 
 \subsection{Computing charge densities}\label{sec:densities}
 The approximation of the diagonal of a matrix function applies to the calculation of the electronic structure of systems of atoms.  In particular, the charge densities of a system are contained in the diagonal of $f(H)$, where $f$ is the Heaviside function
 $$
 f(x)= \begin{cases}
 1&x<0\\
 0&x\geq 0
 \end{cases}
 $$
  and $H$ is the Hamiltonian matrix that is given by   the sum of the kinetic and potential energies. The entries of Hamiltonian matrices usually  decay rapidly away from the main diagonal. Let us consider the parametrized model Hamiltonian given in \cite[Section 4.3]{bekas2007estimator}:
 \[
 H\in\mathbb R^{N_b\cdot N_s\times N_b\cdot N_s},\qquad H_{N_b\cdot (i-1)+j, i'\cdot N_b(i'-1)+j'}=\begin{cases}(i-1)\Delta+(j-1)\delta&i=i',\ j=j'\\
 C\cdot e^{-|j-j'|}&i=i',\ j\neq j'\\
 \frac{C}{n_{od}(|i-i'|+1)}\cdot e^{-|j-j'|}&\text{otherwise}
 \end{cases},
 \]
 where we have set the parameters' values: $N_b=5,N_s=1600,\Delta=10^{-1},\delta=10^{-4},C=10^{-1}$, and $n_{od}=5000$. The HSS structure of the matrix $H$ is shown in the left part of Figure~\ref{fig:hamilton}. We compute the diagonal of $f(H)$ by means of Algorithm~\ref{alg:D&C} and exploiting the relation
 $$
 f(x)= (1-\text{sign}(x))/2.
 $$
 More specifically, we use Algorithm~\ref{alg:D&C} to compute the diagonal of $\text{sign}(H)$; then we subtract the latter from the vector of all ones and we divide by $2$. 
 The procedure has terminated after $3.52$ seconds. As benchmark method we evaluate $f(H)$ by diagonalization with dense arithmetic. This has required $78.62$
 seconds. The Euclidean distance of the vectors obtained with the two approaches is $2.68\cdot 10^{-11}$. In Figure~\ref{fig:hamilton}, the first $500$ components of the two charge densities are shown.
 \begin{figure}[ht!]
 	\centering
 	\includegraphics[scale=.4]{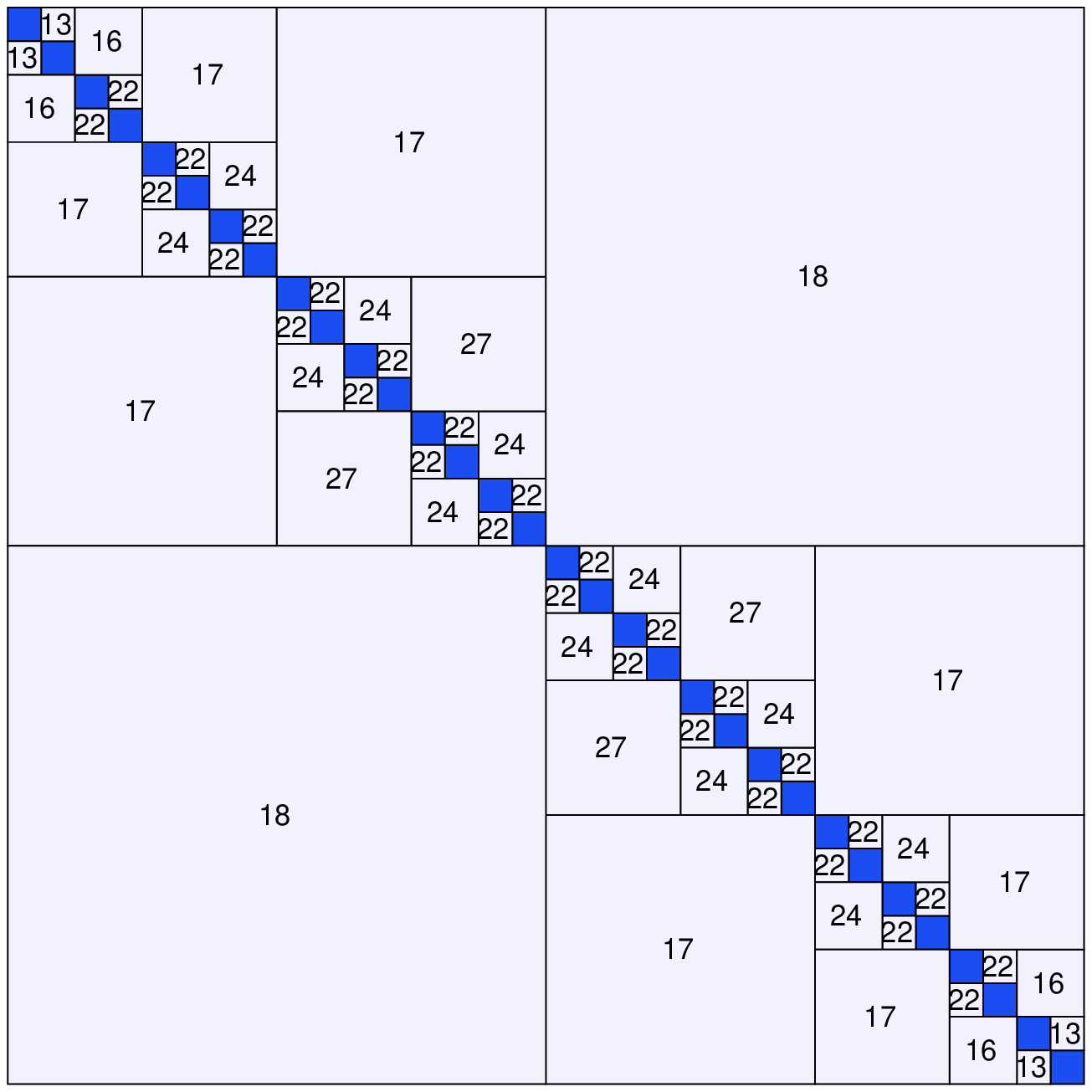}~\includegraphics[scale=.55]{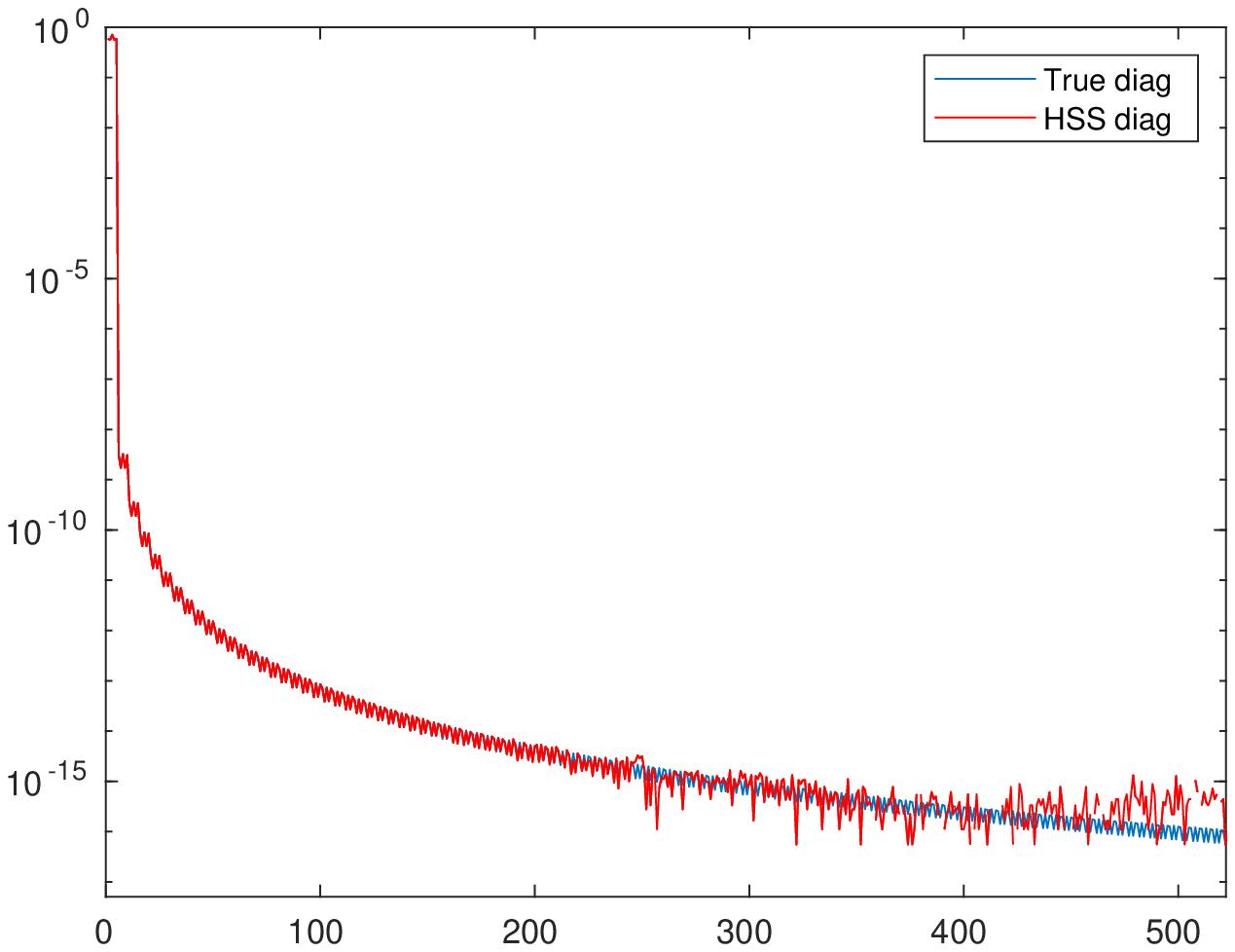}
 	\caption{Left: Ranks of the off-diagonal blocks of the Hamiltonian matrix $H$ from Section~\ref{sec:densities}; the blue blocks indicate matrices for which dense arithmetics is used. Right: Charge densities estimated with dense arithmetic (blue) and with the HSS D\&C method (red).}
 	\label{fig:hamilton}
 \end{figure}
 
 \subsection{Computing subgraph centralities and Estrada index}\label{sec:estrada}
 Given an undirected graph $\mathcal{G}$ with adjacency matrix $A$, the diagonal entries of $\exp(A)$ are called the \emph{subgraph centralities} of the vertices. Their normalized sum $EE_n(\mathcal{G}) := \frac{1}{n} \trace(\exp(A))$ is called the normalized Estrada index of the graph; it was introduced in~\cite{Estrada2000} to characterize the folding of molecular structures and has found applications in network analysis~\cite{Estrada2010}. 
 
 When aiming at the diagonal of $\exp(A)$, at each step of our D\&C method we run a clustering algorithm~\cite{metis} on the matrix to divide it into two components that have few edges between them; the \texttt{ufactor} parameter is set to 100. If the rank of the off-diagonal part is less than $1/15$ of the matrix size, we compute a low-rank update, otherwise we use the \texttt{mmq} algorithm~\cite{GolubMeurant2010}, which approximates each diagonal entry of $\exp(A)$ by Gauss quadrature.
 We also use \texttt{mmq} on matrices of size less than $n_{\min} = 256$. We compare our D\&C algorithm for the diagonal with \texttt{mmq} and \texttt{diag(expm(full(A)))}.
 
 When aiming at $\mathrm{trace}(\exp(A))$, we use \texttt{sum(exp(eig(full(A))))} instead of \texttt{mmq} to address small blocks or blocks that cannot be divided in smaller blocks with a low-rank correction; we noticed that this is faster than letting Matlab work with the matrices in sparse format. As a competitor for the computation of the trace we consider \texttt{sum(exp(eig(full(A))))}.
 
  In Table~\ref{tab:newEstradaDiag} we report the errors and the time needed by our algorithm. The matrices we used are \texttt{minnesota}, \texttt{power}, \texttt{as-735}, \texttt{nopoly}, \texttt{worms20\_10NN},  and \texttt{fe\_body} from the SuiteSparse Matrix Collection~\cite{UFL}.

 \begin{table}[htb]
 	\centering
 	\pgfplotstabletypeset[
 	every head row/.style={
 		before row={
 			\toprule
 			{$A$} 
 			&\multicolumn{2}{c|}{D\&C diagonal}& \multicolumn{2}{c|}{mmq diagonal}&{\texttt{expm}} & \multicolumn{2}{c|}{D\&C trace}& {\texttt{eig}}\\
 		},
 		after row = \midrule,
 	},
 	every last row/.style={ after row=\bottomrule},
 	highlightcell/.style={
        postproc cell content/.append code={
          \ifnum\pgfplotstablerow=#1\relax%
              \pgfkeysalso{@cell content/.add={$\bf}{$}}
          \fi
        }
    },
 	columns = {0,1,2,3,4,5,6,7,8},
 	columns/0/.style = {column name = Size, column type = c|},
 	columns/1/.style = {column name = Time, fixed, highlightcell={2}, highlightcell={3}, highlightcell={4}, highlightcell={5}, highlightcell={1} },
 	columns/2/.style = {column name = Err, skip rows between index={5}{6}, column type =c|},
 	columns/3/.style = {column name = Time, fixed, highlightcell={0} },
 	columns/4/.style = {column name = Err, skip rows between index={5}{6}, column type =c|},
 	columns/5/.style = {column name = Time, skip rows between index={5}{6}, column type = c|},
 	columns/6/.style = {column name = Time, fixed, highlightcell={0}, highlightcell={1}, highlightcell={2}, highlightcell={3}, highlightcell={4}, highlightcell={5}},
 	columns/7/.style = {column name = Err, skip rows between index={5}{6}, column type =c|},
 	columns/8/.style = {column name = Time,  skip rows between index={5}{6},column type=c},
 	]{data/testDiagGraphsNew.dat}
 	\caption{Computation of the diagonal and the trace of $e^A$ for the graphs from Section~\ref{sec:estrada}.}
 	\label{tab:newEstradaDiag}
 \end{table}

 \subsubsection{The lag parameter}\label{sec:lag}
 We compare the timings and the accuracy of our D\&C algorithm on the matrices \texttt{nopoly} and \texttt{worms20\_10NN} for values of the lag parameter in the range $\{1, 2, 3, 4\}$. 
 The results are reported in Table~\ref{tab:lag}. In general, it looks like we can safely put the lag parameter equal to $1$.  
 
   \begin{table}[ht]
 	\centering
 	\pgfplotstabletypeset[
 	every head row/.style={
 		before row={
 			\toprule
 			{} 
 			&\multicolumn{4}{c|}{\texttt{nopoly}}&\multicolumn{4}{c}{\texttt{worms20\_10NN}}\\
 		},
 		after row = \midrule,
 	},
 	every last row/.style={ after row=\bottomrule},
 	highlightcell/.style={
        postproc cell content/.append code={
          \ifnum\pgfplotstablerow=#1\relax%
              \pgfkeysalso{@cell content/.add={$\bf}{$}}
          \fi
        }
    },
 	columns = {0,1,2,3,4,5,6,7,8},
 	columns/0/.style = {column name = Lag, column type = c|},
 	columns/1/.style = {column name = Diag, fixed, highlightcell={2} },
 	columns/2/.style = {column name = Trace, column type =c|, highlightcell={0}},
 	columns/3/.style = {column name = Err diag },
 	columns/4/.style = {column name = Err trace, column type = c| },
    columns/5/.style = {column name = Diag, fixed, highlightcell={0} },
 	columns/6/.style = {column name = Trace, column type =c|, highlightcell={0}},
 	columns/7/.style = {column name = Err diag },
    columns/8/.style = {column name = Err trace },
 	]{data/testLag.dat}
 	\caption{For two matrices from~\cite{UFL} we investigate the influence of the lag parameter on the timing of the D\&C algorithm for computing the diagonal and the trace of $\exp(A)$. }
 	\label{tab:lag}
 \end{table}
 
 
    \section{Block diagonal splitting algorithm for banded matrices}\label{sec:splitting}

As already mentioned in Remark~\ref{rmk:bandedpolynomial} and shown in more detail below, Algorithm~\ref{alg:D&C} applied to a banded matrix returns again a banded matrix when polynomial Krylov subspace bases are used. The purpose of this section is to go further and use this observation to bypass the need for building Krylov subspaces. We can also avoid recursion and arrive at a simpler algorithm.  

\subsection{Block diagonal splitting algorithm from low-rank updates}\label{sec:derivation}

Let $A \in \R^{n \times n}$ be banded with bandwidth $b$.
As in Section~\ref{sec:general} we start with a partitioning
 \begin{equation} \label{eq:yetanotherpart}
 A = D + R, \quad D = \begin{bmatrix}
 D_1 \\
 & \widetilde{D_1}
 \end{bmatrix}, \quad D_1 \in \R^{s\times s}, \quad R = A-D,
 \end{equation}
 but we now suppose that the first diagonal block is small, that is, $s\ll n$; see also Figure~\ref{fig:DE}.
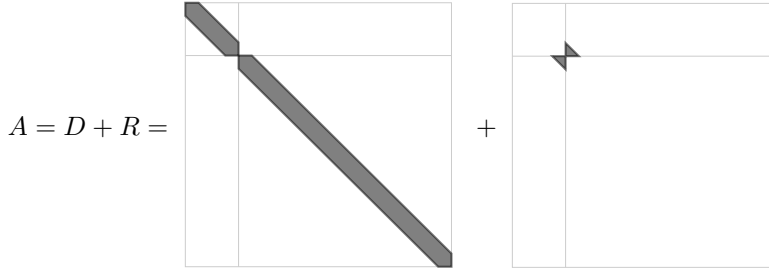
\begin{figure}[ht]
\begin{equation*} 
A = D + R =  \raisebox{-50pt}{
    \begin{tikzpicture}[scale=0.035]
        \draw[very thin, gray!40!white, step=20] (0, 0) rectangle (100, 100);
        \draw[very thin, gray!40!white, step=20] (20, 0) -- (20, 100);
        \draw[very thin, gray!40!white, step=20] (0, 80) -- (100, 80);        
        \draw[fill = black, opacity = 0.5, thick] (0, 100-0) -- (0+5, 100-0) -- (0+20, 85-0) -- (0+20, 80-0) -- (0+15, 80-0) -- (0, 95-0) -- cycle;
        \draw[fill = black, opacity = 0.5, thick] (20, 100-20) -- (20+5, 100-20) -- (80+20, 85-80) -- (80+20, 80-80) -- (80+15, 80-80) -- (20, 95-20) -- cycle;
    \end{tikzpicture}}
     \,\,\,\,+ \raisebox{-50pt}{
    \begin{tikzpicture}[scale=0.035]
        \draw[very thin, gray!40!white, step=20] (0, 0) rectangle (100, 100);
        \draw[very thin, gray!40!white, step=20] (20, 0) -- (20, 100);
        \draw[very thin, gray!40!white, step=20] (0, 80) -- (100, 80); 
        \draw[fill = black, opacity = 0.5, thick] (20, 95-20) -- (20, 105-20) -- (20+5, 100-20) -- (20-5, 100-20) -- cycle;
    \end{tikzpicture}}
\end{equation*}
\caption{Illustration of decomposition~\eqref{eq:yetanotherpart}.}
\label{fig:DE}
\end{figure}

The matrix $R$ can be written as $R = U_1 J U_1^T$ where 
\begin{equation*}
U_1 :=
\begin{tikzpicture}[mymatrixenv,baseline={(0,-0.1)}]
    \matrix[mymatrix] (m)  {
        0 & ~ & I_{2b} & ~ & ~ & 0  &       ~ & ~ \\
    };
    \mymatrixbracebottom{1}{1}{$s-b$}
    \mymatrixbracebottom{3}{3}{$2b$}
    \mymatrixbracebottom{4}{8}{$n-s-b$}
\end{tikzpicture}^T \text{ and }J := \begin{bmatrix}
                               & A(\texttt{s-b+1:s,s+1:s+b}) \\
                               A(\texttt{s+1:s+b, s-b+1:s}) & \\
                             \end{bmatrix}.
                             \end{equation*}
When applying Algorithm~\ref{alg:kryl} to approximate the low-rank update $f(A)-f(D)$ the polynomial Krylov subspaces remain sparse in the following sense.
\begin{lemma}\label{lemma:Um}
Given the setting described above, assume that $2mb \le s$. Then the Krylov subspaces $\mathcal{K}_m(D, U_1)$ and $\mathcal{K}_m(D^T, U_1)$ are each contained in the column span of the $n \times 2mb$ matrix
 \begin{equation*}
  U_m := \begin{tikzpicture}[mymatrixenv,baseline={(0,-0.1)}]
    \matrix[mymatrix] (m)  {
        0 & ~ & I_{2mb} & ~ & ~ & 0 & ~ & ~ \\
    };
    \mymatrixbracebottom{1}{1}{$s-mb$}
    \mymatrixbracebottom{3}{3}{$2mb$}
    \mymatrixbracebottom{4}{8}{$n-s-mb$}
\end{tikzpicture}^T.
 \end{equation*}
\end{lemma}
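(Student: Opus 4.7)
The plan is to argue by a straightforward support-propagation lemma, exploiting that $D$ inherits bandwidth $b$ from $A$ and that $U_1$ is supported on a narrow window around row $s$.

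First I would observe that $D = \mathrm{diag}(D_1, \widetilde{D_1})$ is banded with bandwidth $b$, because each diagonal block inherits the bandwidth from $A$ and the zeros outside the blocks only make $D$ sparser. The columns of $U_1$ are the standard basis vectors $e_{s-b+1}, \ldots, e_{s+b}$, hence every column of $U_1$ has support contained in the row interval $\{s-b+1, \ldots, s+b\}$.

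Next I would establish by induction on $k$ that each column of $D^k U_1$ has support contained in the enlarged interval $\{s-(k+1)b+1, \ldots, s+(k+1)b\}$. The base case $k=0$ is the observation above. For the inductive step, if $v$ is supported on $\{s-\ell+1,\ldots,s+\ell\}$, then $(Dv)_i = \sum_j D_{ij} v_j$ vanishes unless there exists $j$ in the support of $v$ with $|i-j|\le b$; by the triangle inequality this forces $|i-s|\le \ell+b$, giving the claimed one-step widening by $b$ on each side. Iterating $k$ times yields the bound.

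Taking $k = 0, 1, \ldots, m-1$, every column of the block Krylov basis $[U_1, DU_1, \ldots, D^{m-1}U_1]$ is supported on $\{s-mb+1, \ldots, s+mb\}$, and the hypothesis $2mb \le s$ (together with the analogous clearance in the second diagonal block) guarantees that this index set lies inside $\{1,\ldots,n\}$ as a run of $2mb$ consecutive rows. These are precisely the rows selected by the identity block of $U_m$, so each column of $D^k U_1$ lies in $\mathrm{range}(U_m)$, and consequently $\mathcal{K}_m(D,U_1) \subseteq \mathrm{range}(U_m)$. The argument for $\mathcal{K}_m(D^T, U_1)$ is verbatim the same, since $D^T$ is also block diagonal with bandwidth $b$. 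There is no real obstacle beyond careful index bookkeeping; the bandwidth hypothesis on $A$ and the size constraint on $s$ do all the work.
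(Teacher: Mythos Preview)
Your proof is correct and follows essentially the same approach as the paper: both exploit that multiplying by the bandwidth-$b$ matrix $D$ widens the support of $U_1$ by $b$ on each side, so that $D^{k}U_1$ (equivalently $p(D)U_1$ for $p\in\Pi_{m-1}$) has nonzero rows only in positions $s-mb+1,\ldots,s+mb$. The paper states this in one line via the bandwidth of $p(D)$ being $(m-1)b$, while you unfold it as an explicit induction on $k$, but the content is identical.
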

\begin{proof}
For every polynomial $p \in \Pi_{m-1}$, the matrix $p(D)$ is banded with bandwidth $(m-1) b$. In turn, $p(D)U_1$ only has nonzero rows at positions $s-mb+1, \ldots, s+mb$ or, in other words, every column of $p(D)U_1$ is contained in the column span of $U_m$. Combined with the definition $\mathcal{K}_m(D, U_1) = \mathrm{span}[U_1,DU_1,\ldots,D^{m-1}U_1]$, this proves the statement of the lemma.
\end{proof}

The compressions of $D$ and $A$ with respect to the orthonormal basis $U_m$ from Lemma~\ref{lemma:Um} takes the form
\begin{align*}
G_m = U_m^T D U_m & = \text{blkdiag}(A(\texttt{s-mb+1:s, s-mb+1:s}), A(\texttt{s+1:s+mb, s+1:s+mb})))\\
& =: \text{blkdiag}(C_1^{(1)}, C_1^{(2)}), \\
    H_m = U_m^T A U_m & = A(\texttt{s-mb+1:s+mb, s-mb+1:s+mb}) =: B_1.
\end{align*}
Following Algorithm~\ref{alg:kryl}, we define the approximate low-rank update as
\begin{align}
 f(A) - f(D) & = f(A) - \text{blkdiag}(f(D_1), f(\widetilde{D_1}))
 \nonumber \\ & \approx U_m f(B_1) U_m^T - U_m f(\text{blkdiag}(C_1^{(1)}, C_1^{(2)})) U_m^T. \label{eq:lala}
\end{align}
By Lemma~\ref{lemma:Um}, this approximation becomes in fact identical to the one returned by Algorithm~\ref{alg:kryl} if $\mathcal{K}_m(D, U_1)$ and $\mathcal{K}_m(D^T, U_1)$ each have dimension $2mb$. If the Krylov subspaces are of smaller dimension then the approximations may differ, but the exactness properties mentioned in~\ref{sec:traceSymmetric} still hold (see Remark~\ref{rmk:exactness}). 

\subsection{The block diagonal splitting algorithm} \label{sec:splittingalgorithm}

From~\eqref{eq:lala}, it follows that the first part of Algorithm~\ref{alg:D&C} (lines~\ref{step:fact}--\ref{line:rec1}) reduces to the computation of $f(B_1)$, $f(C_1^{(1)})$, $f(C_1^{(2)})$, $f(D_1)$, that is, functions of small submatrices of $A$. For the second part (line~\ref{line:rec2}) one can apply the same reasoning recursively to $\widetilde{D_1}$.

With the simplified assumptions that $n = ks$ for an integer $k$ and $m := \frac{s}{2b}$ is an integer, the discussion above shows that Algorithm~\ref{alg:D&C} reduces to the simpler Algorithm~\ref{alg:bandedpoly}, where
\begin{itemize}
    \item $D := \mathrm{blkdiag}(D_1, \ldots, D_k)$ and $D_1, \ldots, D_k$ are the consecutive $s\times s$ diagonal blocks of $A$;
    \item $\widetilde B := \mathrm{blkdiag}(B_1, \ldots, B_{k-1})$ and $B_1, \ldots, B_{k-1}$ are consecutive $s \times s$ diagonal blocks of $A$ starting from index $\frac{s}{2}+1$;
    \item $\widetilde C := \mathrm{blkdiag}(C_1^{(1)}, C_1^{(2)}, \ldots, C_{k-1}^{(1)}, C_{k-1}^{(2)})$ where $C_1^{(1)}, \ldots, C_{k-1}^{(2)}$ are the consecutive $\frac{s}{2} \times \frac{s}{2}$ diagonal blocks of $A$ starting from index $\frac{s}{2}+1$;
    \item $B := \mathrm{blkdiag}(Z, \widetilde B, Z)$, $C := \mathrm{blkdiag}(Z, \widetilde C, Z)$, where $Z := \texttt{zeros}(\frac{s}{2})$.
\end{itemize}
The resulting splitting $A = D + B - C$ is illustrated in Figure~\ref{fig:BCD}.

\begin{algorithm}[ht]
\caption{Approximation of $f(A)$ for banded $A$}
\label{alg:bandedpoly}
 \begin{algorithmic}[1]
  \REQUIRE{Banded matrix $A \in \R^{n \times n}$ of bandwidth $b$, block size $s$, function $f$}
  \ENSURE{Approximation $\mathrm{approx}_f^{(s)}(A)$ of $f(A)$}
  \STATE{Define $\widetilde B$, $B$, $\widetilde C$, $C$,  and split $A = D + B - C$ as explained in Section~\ref{sec:splittingalgorithm}}
  \STATE{Compute $f(D)$, $f(\widetilde B)$, and $f(\widetilde C)$ by evaluating $f$ on each block of $D$, $\widetilde B$, and $\widetilde C$}
  \STATE{Set $f_B := \mathrm{blkdiag}(Z, f(\widetilde B), Z)$ and $f_C := \mathrm{blkdiag}(Z, f(\widetilde C), Z)$, where $Z:=\texttt{zeros}(s/2)$} 
  \STATE{Return $f(D) + f_B - f_C$}
 \end{algorithmic}
\end{algorithm}

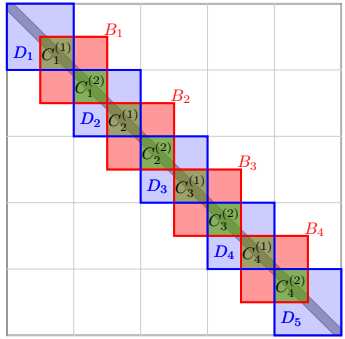
\begin{figure}[ht]
 \begin{center}
 \resizebox{0.3\textwidth}{!}{
    \begin{tikzpicture}[scale=0.07]
    \draw [thick] (0,0) rectangle (100, 100);
        \draw[very thin, gray!40!white, step=20] (0, 0) grid (100, 100);
        \draw[fill = black, opacity = 0.3] (0, 100) -- (2, 100) -- (100, 2) -- (100, 0) -- (98, 0) -- (0, 98) -- cycle;
        \foreach \x in {1, 2, 3, 4, 5}
            { \draw[fill = blue, opacity = 0.2, blue, very thick] (20*\x-20,100-20*\x) rectangle (20*\x, 120-\x*20); 
            \node[blue] at (20*\x-15, 105-20*\x) {$D_{\x}$};
            }
        \foreach \x in {1, 2, 3, 4}
            { \draw[fill = red, opacity = 0.4, red, very thick] (20*\x-10,90-20*\x) rectangle (20*\x + 10, 110-20*\x); }
        \foreach \x in {1, 2, 3, 4}
            { \draw[fill = green, opacity = 0.3, thick] (20*\x-10,100-20*\x) rectangle (20*\x, 110-20*\x);
            \draw[fill = green, opacity = 0.4, thick] (20*\x+10,100-20*\x) rectangle (20*\x, 90-20*\x);
            \node at (20*\x - 5, 105-20*\x) {$C_{\x}^{(1)}$};
            \node at (20*\x + 5, 95-20*\x) {$C_{\x}^{(2)}$};
            }
        \foreach \x in {1, 2, 3, 4}
            { \draw[red, very thick] (20*\x-10,90-20*\x) rectangle (20*\x + 10, 110-20*\x); 
            \node[red] at (20*\x+12, 112-20*\x) {$B_{\x}$};}
        \foreach \x in {1, 2, 3, 4, 5}
            { \draw[blue, very thick] (20*\x-20,100-20*\x) rectangle (20*\x, 120-\x*20); 
            \node[blue] at (20*\x-15, 105-20*\x) {$D_{\x}$};
            }
    \end{tikzpicture}}
    \end{center}
    \caption{The blocks that are involved in the computation of $f(A)$ for a banded matrix $A$. }
        \label{fig:BCD}
\end{figure}

%

\subsection{Convergence analysis of block diagonal splitting method}\label{sec:convbanded}

Algorithm~\ref{alg:bandedpoly} corresponds to Algorithm~\ref{alg:D&C} where the updates are performed using projection onto spaces that \emph{include} polynomial Krylov subspaces of dimension $m := \left \lfloor \frac{s}{2b} \right \rfloor$; thanks to Remark~\ref{rmk:exactness} and Proposition~\ref{prop:exactDC} this implies that Algorithm~\ref{alg:bandedpoly} is exact for all $f \in \Pi_m$. 
This property allows us to prove convergence results for Algorithm~\ref{alg:bandedpoly}. 
In the following, we let $W(A) := \{z^T A z \mid |z| = 1\}$ denote the numerical range of $A$. 
\begin{theorem}\label{thm:convpoly}
 Let $A \in \R^{n \times n}$ be a banded matrix with bandwidth $b$. For a given block size $s$, the output $\mathrm{approx}_f^{(s)}(A)$ of Algorithm~\ref{alg:bandedpoly} satisfies
 \begin{equation*}
  \| f(A) - \mathrm{approx}_f^{(s)}(A) \|_2 \le 4 C \min_{p \in \Pi_{m}} \| f - p \|_{W(A)},
 \end{equation*}
 where $C=1$ if $A$ is normal and $C=1+\sqrt{2}$ otherwise, and $m := \left\lfloor \frac{s}{2b} \right\rfloor$.
\end{theorem}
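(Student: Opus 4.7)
The plan is to exploit three ingredients that are all already in place: the linearity of Algorithm~\ref{alg:bandedpoly} in $f$, the exactness on $\Pi_m$ inherited from Proposition~\ref{prop:exactDC} via Remark~\ref{rmk:exactness}, and the Crouzeix--Palencia-type bound $\|g(M)\|_2 \le C\,\|g\|_{W(M)}$, with $C=1$ in the normal case.

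First, observe that the map $f \mapsto \mathrm{approx}_f^{(s)}(A) = f(D) + f_B - f_C$ is linear, because every one of its pieces is obtained by applying $f$ to a fixed collection of submatrices of $A$. Combined with the exactness $\mathrm{approx}_p^{(s)}(A) = p(A)$ for $p\in\Pi_m$ (which follows because the projection subspaces identified in Lemma~\ref{lemma:Um} contain the required polynomial Krylov subspaces, so Remark~\ref{rmk:exactness} and Proposition~\ref{prop:exactDC} apply), we may replace $f$ by $f-p$ for any $p\in\Pi_m$:
\begin{equation*}
 f(A) - \mathrm{approx}_f^{(s)}(A) \;=\; (f-p)(A) - \mathrm{approx}_{f-p}^{(s)}(A).
\end{equation*}

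Next, by the triangle inequality,
\begin{equation*}
 \bigl\| f(A) - \mathrm{approx}_f^{(s)}(A) \bigr\|_2 \;\le\; \|(f-p)(A)\|_2 + \|(f-p)(D)\|_2 + \|(f-p)_B\|_2 + \|(f-p)_C\|_2.
\end{equation*}
Crouzeix--Palencia gives $\|(f-p)(A)\|_2 \le C\,\|f-p\|_{W(A)}$. For the remaining three terms, each of the matrices $D_i$, $B_i$, $C_i^{(1)}$, $C_i^{(2)}$ appearing in the block-diagonal structure of $D$, $\widetilde B$, $\widetilde C$ is a principal submatrix of $A$, and therefore its numerical range is contained in $W(A)$. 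Using the fact that the spectral norm of a block-diagonal matrix (possibly padded with zero blocks) is the maximum of the spectral norms of its nonzero blocks, each of $\|(f-p)(D)\|_2$, $\|(f-p)_B\|_2$, $\|(f-p)_C\|_2$ is bounded by $C\,\|f-p\|_{W(A)}$. Summing yields
\begin{equation*}
 \bigl\| f(A) - \mathrm{approx}_f^{(s)}(A) \bigr\|_2 \;\le\; 4C\,\|f-p\|_{W(A)},
\end{equation*}
and taking the infimum over $p \in \Pi_m$ concludes the proof.

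The only nonroutine point is the first reduction, namely verifying that the algorithm really is linear in $f$ and really is exact on $\Pi_m$ when interpreted through the lens of Section~\ref{sec:derivation}. Once this is established, the rest is just triangle inequality plus the submatrix inclusion of numerical ranges, and the constant $4C$ comes out immediately (in fact one obtains $1+3C$, and $1+3C \le 4C$ because $C\ge 1$).
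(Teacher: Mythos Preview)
Your argument is correct and is essentially identical to the paper's proof: linearity plus exactness on $\Pi_m$ to replace $f$ by $f-p$, then Crouzeix--Palencia on $A$ and on each principal submatrix together with $W(\text{submatrix})\subset W(A)$ to bound the four pieces. The only slip is the parenthetical at the end: the first term $\|(f-p)(A)\|_2$ already carries the factor $C$ (Crouzeix--Palencia), so the sum is $C+3C=4C$, not $1+3C$; your earlier line stated this correctly, so just drop the parenthetical.
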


\begin{proof}
  Algorithm~\ref{alg:bandedpoly} is exact for a polynomial in $\Pi_{m}$ and is linear with respect to $f$, therefore for all $p \in \Pi_m$ we have
 \begin{align*}
  \| f(A) - \mathrm{approx}_f^{(s)}(A) \|_2 & = \| f(A) - p(A) + \mathrm{approx}_p^{(s)}(A) - \mathrm{approx}_f^{(s)}(A) \|_2 \\
  & = \| f(A) - p(A) - \mathrm{approx}_{f-p}^{(s)}(A) \|_2 \\
  & \le \| (f-p)(A) \|_2 + \| \mathrm{approx}_{f-p}^{(s)}(A) \|_2.
 \end{align*}
 Using a result by Crouzeix and Palencia~\cite{CrouzeixPalencia2017}, we have $\|(f-p)(A)\|_2 \le C \|f - p \|_{W(A)}$.
Since the spectral norm of a block diagonal matrix is the maximum spectral norm of its blocks, it holds that
\begin{align*}
 \| \mathrm{approx}_{f-p}^{(s)}(A) \|_2 & \le \max_{i} \|(f-p)(D_i)\|_2 + \max_{i} \|(f-p)(B_i)\|_2 + \max_{i,j} \|(f-p)(C_i^{(j)})\|_2 \\
 & \le 3 C\| (f-p) \|_{W(A)}.
\end{align*}
In the latter inequality, we used again~\cite{CrouzeixPalencia2017}  combined with the fact that the numerical range of a principal submatrix of $A$ is contained in $W(A)$. 
We conclude that
\begin{equation*}
 \| f(A) - \mathrm{approx}_f^{(s)}(A) \|_2 \le 4 C \| (f-p) \|_{W(A)},
\end{equation*}
and the claim follows from taking the minimum over all polynomials $p \in \Pi_m$.
\end{proof}

When considering the approximation of the \emph{trace} of a matrix function by Algorithm~\ref{alg:bandedpoly}, a stronger convergence result could be proved, because of the exactness of the low-rank updates (and therefore of the D\&C algorithm) for polynomials in $\Pi_{2m}$. In the specific case of Algorithm~\ref{alg:bandedpoly}, however, we can prove a stronger result even for the diagonal entries of $f(A)$.


\begin{theorem}\label{thm:diagBandedExact}
Let $A \in \R^{n \times n}$ with bandwidth $b$, let us fix a block size $s$, let $m := \lfloor \frac{s}{2b} \rfloor$. Then the output $\mathrm{approx}_p^{(s)}(A)$ of Algorithm~\ref{alg:bandedpoly} satisfies
 \begin{equation}\label{eq:diagexact}
  \diag(p(A))  = \diag( \mathrm{approx}_p^{(s)}(A)) 
  \end{equation}
  for all polynomials $p \in \Pi_{2m+1}$.
 \end{theorem}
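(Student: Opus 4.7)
The plan is to interpret each diagonal entry of $p(A)$ combinatorially as a sum over closed walks and then apply inclusion-exclusion with respect to the submatrices $D_k, B_\ell, C_\ell^{(j)}$ used by Algorithm~\ref{alg:bandedpoly}. By linearity, it suffices to verify~\eqref{eq:diagexact} for monomials $p(x) = x^j$ with $j \in \{0, 1, \ldots, 2m+1\}$. For such $j$, the quantity $(A^j)_{ii}$ expands as a weighted sum over closed walks $i = i_0, i_1, \ldots, i_j = i$ with $|i_\ell - i_{\ell-1}| \le b$, and analogous expansions hold for $(p(D))_{ii}$, $(p_B)_{ii}$, and $(p_C)_{ii}$ with walks constrained to lie in the relevant block of $A$.

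The main ingredient is the following range bound: every closed walk of length $j \le 2m+1$ with integer steps of size at most $b$ satisfies $\max_\ell i_\ell - \min_\ell i_\ell \le s/2$. To prove it, I would classify the steps by sign, writing $j = k_+ + k_- + k_0$ for the numbers of positive, negative, and zero steps. Because the signed sum of steps is zero, the sum $S_+$ of positive magnitudes equals the sum $S_-$ of negative magnitudes, and the total absolute travel $T = 2S_+$ is bounded by $2\min(k_+, k_-)\,b$. Since $k_+, k_-$ are nonnegative integers with $k_+ + k_- \le j = 2m+1$, one has $\min(k_+, k_-) \le m$, so $T \le 2mb \le s$; combining with the fact that the walk must cover at least $2(\max - \min)$ total distance yields $\max - \min \le T/2 \le s/2$.

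I would then use this bound to match walks to the algorithm's blocks. If $i$ lies in the first or last band of width $s/2$, the range bound places every closed walk inside the unique $D$-block containing $i$, and since the $B$- and $C$-contributions vanish for such $i$, the algorithm's output equals $(D_k^j)_{ii} = (A^j)_{ii}$. Otherwise, $i$ belongs to some $D_k$ at position $r$, to a block $B_{\ell^*}$ with $\ell^* = k-1$ if $r \le s/2$ and $\ell^* = k$ if $r > s/2$, and to the overlap $D_k \cap B_{\ell^*}$, which coincides with one of the $C$-blocks. A short case analysis shows that if a closed walk's support were to escape both $D_k$ and $B_{\ell^*}$, one would force $\max - \min \ge s/2 + 1$, contradicting the range bound. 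Hence every closed walk is contained in $D_k$ or in $B_{\ell^*}$, with intersection exactly the $C$-block, and inclusion-exclusion then yields $(A^j)_{ii} = (D_k^j)_{ii} + (B_{\ell^*}^j)_{ii} - (C^j)_{ii}$, matching the algorithm's output.

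The main obstacle, in my view, is the integer/parity step inside the lemma: the naive estimate $T \le jb$ only yields $\max - \min \le mb + b/2$, which can exceed $s/2$ when $2mb = s$ and $b \ge 2$; one genuinely needs to exploit that $k_+$ and $k_-$ are integers with $k_+ + k_- \le 2m+1$ to sharpen the bound to $\min(k_+, k_-) \le m$ and thus $T \le 2mb$. Once this tight range estimate is in hand, the subsequent inclusion-exclusion is a routine bookkeeping of indices relative to block boundaries.
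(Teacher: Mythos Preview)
Your proposal is correct and follows the same approach as the paper: both interpret $(A^j)_{ii}$ combinatorially as a weighted sum over closed walks in the graph of $A$, argue that every such walk of length at most $2m+1$ is confined to a single $D$-block or a single $B$-block, and then check that the algorithm's sum $f(D)+f_B-f_C$ counts each walk exactly once. The only difference is in packaging: the paper states the confinement via a graph-distance assertion (vertices in $D_i$ are at distance $\ge m+1$ from $B_{i+1}$ and $B_{i-2}$) and a four-case enumeration, while you prove it through the explicit range bound $\max_\ell i_\ell - \min_\ell i_\ell \le mb$ obtained from the integer observation $\min(k_+,k_-)\le m$; your formulation makes more visible exactly why degree $2m+1$ (and not merely $2m$) is admissible, but the two arguments are equivalent in substance.
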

 
 \begin{proof}
 The proof is in the spirit of~\cite[Lemma 5.1]{PozzaTudisco2018}, but the aim is different. 
 By linearity of Algorithm~\ref{alg:bandedpoly}, it is sufficient to prove~\eqref{eq:diagexact} when $p(x) = x^k$, with $0 \le k \le 2m + 1$, that is, to prove that the diagonal entries of $A^k$ and $\mathrm{approx}_p^{(s)}(A)$ coincide. To study the entries of $A^k$, it is helpful to consider the associated directed graph $\mathcal{G}(A)$ with vertices $1, \ldots, n$ and adjacency matrix $A$. The $j$th diagonal entry of $A^k$ is given by the sum of the weights of all the paths of length exactly $k$ that start and end at vertex $i$; we recall that the weight of a path $v_1 \to v_2 \to \ldots \to v_k$ of length $k$ is defined as the product of the weights of the edges $\prod_{h=1}^{k-1} A_{v_h v_{h+1}}$. We also consider the graphs $\mathcal{G}(B_i)$, $\mathcal{G}(D_i)$, $\mathcal{G}(C_i^{(1,2)})$. The diagonal entries of $\mathrm{approx}_p^{(s)}(A)$ are obtained by summing the weights of the paths of length exactly $k$ in the graphs $\mathcal{G}(D_i)$ and $\mathcal{G}(B_i)$ and subtracting the weights of the paths of length exactly $k$ in the graphs $\mathcal{G}(C_i^{(1)})$ and $\mathcal{G}(C_i^{(2)})$ for all indices $i$. Therefore, it is sufficient to prove that this sum coincides with the sum of the weights of the paths of length exactly $k$ in $\mathcal{G}(A)$.
 
 
 Note that, for all indices $i$, $\mathcal{G}(C_i^{(1)})$ is a subgraph of $\mathcal{G}(D_i)$ and $\mathcal{G}(B_i)$; $\mathcal{G}(C_i^{(2)})$ is a subgraph of $\mathcal{G}(D_{i+1})$ and $\mathcal{G}(B_i)$; all these are subgraphs of $\mathcal{G}(A)$. The distance from a vertex in $\mathcal{G}(D_i)$ and one in $\mathcal{G}({B}_{i+1})$ or $\mathcal{G}({B}_{i-2})$ is at least $m+1$. Therefore, for each vertex $v \in \{1, \ldots, n\}$ each path in $\mathcal{G}(A)$ of length at most $2m+1$ from $v$ to itself satisfies one (and only one) of the following conditions for some $i \in \{1, \ldots, \frac{n}{s}-1\}$:
 \begin{enumerate}
  \item It is contained in $\mathcal{G}({C}_i^{(1)})$, $\mathcal{G}({B}_i)$, and $\mathcal{G}({D}_i)$, but in no other subgraph.
  \item It is contained in  $\mathcal{G}({C}_i^{(2)})$, $\mathcal{G}({B}_i)$, and $\mathcal{G}({D}_{i+1})$, but in no other subgraph.
  \item It is contained in $\mathcal{G}({B}_i)$ but in no other subgraph.
  \item It is contained in $\mathcal{G}({D}_i)$ but in no other subgraph.
 \end{enumerate}
 In all these four cases, the weight of the path is counted exactly once in $\mathrm{approx}_p^{(s)}(A)$; we conclude that the diagonal entries of $\mathrm{approx}_p^{(s)}(A)$ coincide with the ones of $p(A)$ for $p(x) = x^k$ for $k \le 2m+1$ and therefore for all polynomials in $\Pi_{2m+1}$.
\end{proof}

 
 
 A convergence result for the diagonal elements of the output of Algorithm~\ref{alg:bandedpoly} follows from Theorem~\ref{thm:diagBandedExact} similarly to Theorem~\ref{thm:convpoly}.
 \begin{corollary}\label{cor:diagtrace}
  With the same assumptions of Theorem~\ref{thm:convpoly} it holds that 
 \begin{equation*}
  |f(A)_{ii} - \mathrm{approx}_f^{(s)}(A)_{ii} | \le 4C \min_{p \in \Pi_{2m+1}} \| f - p \|_{W(A)}
  \end{equation*}
  for all $i = 1, \ldots, n$ and therefore
  \begin{equation*}
   |\trace(f(A)) - \trace(\mathrm{approx}_f^{(s)}(A))| \le 4C n \min_{p \in \Pi_{2m+1}} \| f - p \|_{W(A)},
  \end{equation*}
  where $C = 1$ for normal matrices $A$, and $C = 1 + \sqrt{2}$ otherwise.
 \end{corollary}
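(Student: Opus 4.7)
The plan is to mimic the template used in the proof of Theorem~\ref{thm:convpoly}, but with Theorem~\ref{thm:diagBandedExact} playing the role of the basic exactness result. Concretely, Algorithm~\ref{alg:bandedpoly} is linear in $f$, and Theorem~\ref{thm:diagBandedExact} gives $\diag(p(A)) = \diag(\mathrm{approx}_p^{(s)}(A))$ for every $p \in \Pi_{2m+1}$. So for any such $p$ one can write, entry by entry,
\[
f(A)_{ii} - \mathrm{approx}_f^{(s)}(A)_{ii} = (f-p)(A)_{ii} - \mathrm{approx}_{f-p}^{(s)}(A)_{ii},
\]
and the task reduces to bounding each of the two terms on the right.

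For the first term I would bound $|(f-p)(A)_{ii}| \le \|(f-p)(A)\|_2 \le C \|f-p\|_{W(A)}$ using the Crouzeix--Palencia inequality, exactly as in Theorem~\ref{thm:convpoly}. For the second term, I would use the explicit block structure from Section~\ref{sec:splittingalgorithm}: the $i$-th diagonal entry of $\mathrm{approx}_{f-p}^{(s)}(A)$ is the sum/difference of at most three diagonal entries of $(f-p)$ applied to principal submatrices of $A$ (one entry from some $D_j$, one from some $B_k$, and one from some $C_k^{(l)}$, with some of these possibly absent near the boundary). Each such entry is bounded in modulus by the spectral norm of $(f-p)$ on the corresponding submatrix, which by Crouzeix--Palencia and the inclusion of the submatrix's numerical range in $W(A)$ is at most $C \|f-p\|_{W(A)}$. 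Summing up, $|\mathrm{approx}_{f-p}^{(s)}(A)_{ii}| \le 3C \|f-p\|_{W(A)}$.

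Combining the two estimates gives $|f(A)_{ii} - \mathrm{approx}_f^{(s)}(A)_{ii}| \le 4C \|f-p\|_{W(A)}$ for every $p \in \Pi_{2m+1}$, and taking the minimum over $p$ yields the first claim of the corollary. The trace bound then follows immediately by writing $\trace(f(A)) - \trace(\mathrm{approx}_f^{(s)}(A)) = \sum_{i=1}^n \bigl( f(A)_{ii} - \mathrm{approx}_f^{(s)}(A)_{ii} \bigr)$ and applying the triangle inequality together with the pointwise bound already obtained.

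The only real obstacle is the bookkeeping in the middle step, namely verifying that for each index $i$ the contribution to $\mathrm{approx}_{f-p}^{(s)}(A)_{ii}$ really involves at most one block of each type $D_j$, $B_k$, $C_k^{(l)}$; this can be read off the partitioning in Figure~\ref{fig:BCD}. Everything else is a direct re-use of ingredients already present in the proof of Theorem~\ref{thm:convpoly}.
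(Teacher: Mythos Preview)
Your proposal is correct and follows essentially the same route as the paper: use Theorem~\ref{thm:diagBandedExact} to replace $f$ by $f-p$ on the diagonal, then bound via Crouzeix--Palencia and the inclusion of submatrix numerical ranges in $W(A)$. The only difference is organizational: the paper bounds $|(f-p)(A)_{ii} - \mathrm{approx}_{f-p}^{(s)}(A)_{ii}|$ by the spectral norm of the full difference matrix $(f-p)(A) - \mathrm{approx}_{f-p}^{(s)}(A)$ and then reuses the estimate from the proof of Theorem~\ref{thm:convpoly} verbatim, which sidesteps the block-counting bookkeeping you flag at the end.
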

 
 \begin{proof}
   According to Theorem~\ref{thm:diagBandedExact}, for all polynomials $p \in \Pi_{2m+1}$ we have that
  \begin{equation*}
   |f(A)_{ii} - \mathrm{approx}_f^{(s)}(A)_{ii} | = |(f-p)(A)_{ii} - \mathrm{approx}_{f-p}^{(s)}(A)_{ii} | \le \| (f-p)(A) - \mathrm{approx}_{f-p}^{(s)}(A) \|_2.
  \end{equation*}
  From here one proceeds as in the proof of Theorem~\ref{thm:convpoly}. \qedhere
%
%
%
 \end{proof}
 

 In Figure~\ref{fig:doublespeedbanded} we illustrate the convergence of $\mathrm{approx}_f^{(m)}(A)$ for the exponential of two banded matrices and we observe that the diagonal -- and therefore the trace -- converges much faster than the full matrix function.
 
 \begin{figure}[ht]
     \centering
     \begin{subfigure}[b]{0.4\textwidth}
         \centering
         \includegraphics[width=\textwidth]{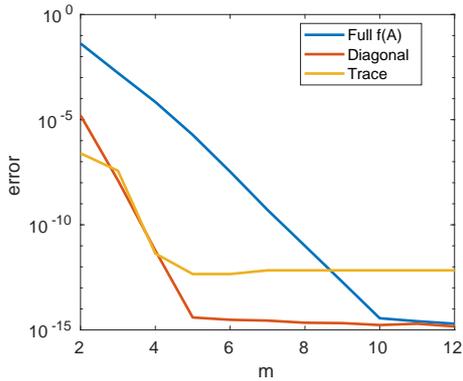}
         \caption{Normalized random symmetric tridiagonal matrix.}
     \end{subfigure}
     \hfill
     \begin{subfigure}[b]{0.4\textwidth}
         \centering
         \includegraphics[width=\textwidth]{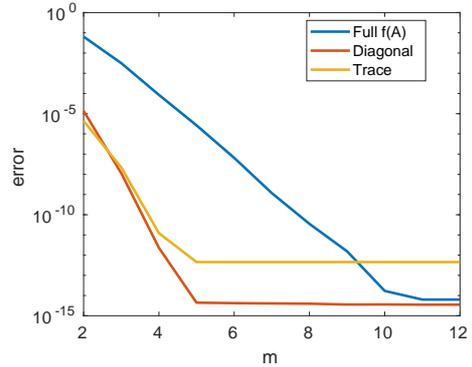}
         \caption{Normalized random non-symmetric pentadiagonal matrix.}
     \end{subfigure}
        \caption{Convergence of the errors $\|f(A) - \mathrm{approx}_f^{(m)}(A)\|_F$,
 $\|\text{diag}(f(A) - \mathrm{approx}_f^{(m)}(A) )\|_2$, and 
 $|\trace(f(A) - \mathrm{approx}_f^{(m)}(A) )|$ for $f=\exp$.}
 \label{fig:doublespeedbanded}
\end{figure}

\subsection{Adaptive algorithm}\label{sec:adaptive}
In Algorithm~\ref{alg:bandedpoly} the block size $s$, which determines the accuracy of the approximation of $f(A)$, needs to be chosen a priori and is uniform across the whole matrix. In the following, we develop a strategy to choose the block size adaptively and possibly differently in different parts of the matrix.

When $f$ is a polynomial of degree $m$ and $A$ has bandwidth $b$, $f(A)$ has bandwidth (at most) $bm$ and the discussion in Section~\ref{sec:convbanded} implies that Algorithm~\ref{alg:adaptiveSplitting} is exact for block size $s = 2bm$. This motivates the following strategy. For a target accuracy $\varepsilon$, we define the \emph{$\varepsilon$-approximate bandwidth} of a matrix to be the bandwidth that the matrix has if we discard all the entries with absolute value smaller than $\varepsilon$. In the first phase, we choose the sizes of the blocks $D_1, D_2, \ldots, D_k$ in such a way that their sizes are at least twice the $\varepsilon$-approximate bandwidth of $f(D_1), f(D_2), \ldots, f(D_k)$, and we set $F := \mathrm{blkdiag}(f(D_1), \ldots, f(D_k))$. In the second phase we compute the ``updates" between each pair of consecutive blocks $D_j$ and $D_{j+1}$ corresponding to indices $[j_1, h]$ and $[h+1, j_2]$ of $A$, respectively, similarly to~\eqref{eq:lala}. More precisely, we take 
\begin{equation}\label{eq:splittingupdate}
    P := f(B) - \mathrm{blkdiag}(f(C^{(1)}), f(C^{(2)})), \quad B := A(J, J), \quad C^{(1)} := A(J_1, J_1), \quad C^{(2)} := A(J_2, J_2)
\end{equation}
for $J_1 := \left [ \lfloor \frac{j_1 + h}{2} \rfloor, h \right ]$, $J_2 := \left [ h+1, \lceil \frac{j_2 + h}{2} \rceil \right ]$, and $J := J_1 \cup J_2$, and add the matrix $P$ to the submatrix of $F$ corresponding to the indices $J$. As a heuristic criterion to check convergence,
we check if the absolute value of all the entries corresponding to the first and last column and row of $P$ is smaller than $\varepsilon$; if this is not the case, the sets $J_1$, $J_2$, and $J$ are enlarged. The procedure is summarized in Algorithm~\ref{alg:adaptiveSplitting}.

\begin{algorithm}
\caption{Block diagonal splitting algorithm: Adaptive version}
\label{alg:adaptiveSplitting}
 \begin{algorithmic}[1]
  \REQUIRE{Banded matrix $A \in \R^{n \times n}$, tolerance $\varepsilon$, function $f$, minimum block size $n_{\min}$}
  \ENSURE{Approximation $F$ of $f(A)$}
  \STATE{Initialize $F \leftarrow \texttt{zeros}(n)$, $s \leftarrow n_{\min}$, $i \leftarrow 1$ ($i$ denotes where the next diagonal block starts)}
  \WHILE{$ i \le n $}
    \IF{ $f(A(\texttt{i:i+s-1, i:i+s-1}))$ has $\varepsilon$-approximate bandwidth $\le s/2$}
        \STATE{Set $F(\texttt{i:i+s-1, i:i+s-1}) = f(A(\texttt{i:i+s-1, i:i+s-1}))$, $i \leftarrow i+s$, $s \leftarrow \min\{s/2, n_{\min}\}$}
    \ELSE 
        \STATE{Choose a larger block size $s \leftarrow \min\{2s, n-i+1\}$}
    \ENDIF
\ENDWHILE
\FOR{each pair of consecutive diagonal blocks}
    \STATE{Compute $P$ using matrices $B$, $C^{(1)}$, $C^{(2)}$ corresp. to indices $J$, $J_1$, and $J_2$ as in~\eqref{eq:splittingupdate}}
    \WHILE{the update has not converged}
    \STATE{Enlarge matrices $B$, $C^{(1)}$, $C^{(2)}$ in~\eqref{eq:splittingupdate} corresp. to indices $J$, $J_1$, and $J_2$, and recompute $P$}
    \ENDWHILE
    \STATE{Sum $F(J,J) \leftarrow F(J,J) + P$}
\ENDFOR
 \end{algorithmic}
\end{algorithm}

\section{Numerical tests for Algorithm~\ref{alg:adaptiveSplitting}}\label{sec:examplepoly}

\subsection{Fermi-Dirac density matrix of one-dimensional Anderson model}\label{sec:fermidirac}
As a first numerical experiment, we test Algorithm~\ref{alg:adaptiveSplitting} on the function $f(z) = \left ( \exp(\beta(z-\mu))+1\right )^{-1}$ and a symmetric tridiagonal matrix with diagonal entries uniformly randomly distributed in $[0, 1]$ and all other nonzero elements equal to $-1$, as in~\cite[Section 5]{BenziRazouk2007}; this is the Fermi–Dirac density matrix corresponding to a one-dimensional Anderson model. We use $\mu = 0.5$ and $\beta = 1.84$. We set $\varepsilon = 10^{-5}$, $n_{\min} = 32$, and we consider values of $n$ ranging from $2^9$ to $2^{19}$. For each value of $n$, we compare the approximation $F$ returned by Algorithm~\ref{alg:adaptiveSplitting} to the approximation $p(A)$ where $p$ is a Chebyshev polynomial interpolating $f$ on $[-2, 3]$ of degree $d := \lceil \texttt{nnz}(F) / (2n) \rceil$; choosing the degree in this way gives a banded approximation of $f(A)$ with roughly the same storage cost and a comparable accuracy. The results are reported in Table~\ref{tab:fermidirac}; the approximation errors (relative errors in the Frobenius norm) and the timings are comparable.

\begin{table}[ht]
 	\centering
 			\pgfplotstabletypeset[
 		every head row/.style={
 			before row={
 				\toprule
 				{$A$} &\multicolumn{3}{c|}{Splitting algorithm}
 	&\multicolumn{2}{c|}{Chebyshev interpolation} & Dense \\
 			},
 			after row = \midrule,
 		},
 		every last row/.style={ after row=\bottomrule},
 		highlightcell/.style={
            postproc cell content/.append code={
              \ifnum\pgfplotstablerow=#1\relax%
                  \pgfkeysalso{@cell content/.add={$\bf}{$}}
              \fi
            }
        },
 		columns = {0,1,2,3,5,6,4},
 		columns/0/.style = {column name = Size, fixed, column type = c|},
 		columns/1/.style = {column name = Time, fixed,  highlightcell={6}, highlightcell={7}, highlightcell={8}, highlightcell={9}, highlightcell={10}, zerofill},
 		columns/2/.style = {column name = Err, skip rows between index={6}{11}, zerofill },
 		columns/3/.style = {column name = $\texttt{nnz}/n$, column type=c|, zerofill},
 		columns/4/.style = {column name = Time, skip rows between index={6}{11}, fixed, column type = c,  zerofill},
 		columns/5/.style = {column name = Time, fixed, highlightcell={0}, highlightcell={1}, highlightcell={2}, highlightcell={3}, highlightcell={4}, highlightcell={5}, zerofill},
 		columns/6/.style = {column name = Err, skip rows between index={6}{11}, column type = c|, zerofill},
 		]{data/testfermidirac.dat}
 	\caption{Computation of $f(A)$ by Algorithm~\ref{alg:adaptiveSplitting}, where $f(z) = \left ( \exp(\beta(z-\mu))+1\right )^{-1}$ and the matrices $A$ are symmetric tridiagonal matrices with diagonal entries uniformly randomly distributed in $[0, 1]$ and all other nonzero elements equal to $-1$, as discussed in Section~\ref{sec:fermidirac}. 
 	}
 	\label{tab:fermidirac}
 \end{table}
 
 \subsection{Spectral adaptivity: Comparison with interpolation by Chebyshev polynomials}\label{sec:cheby}
 
 An advantage of (polynomial) Krylov subspace over polynomial interpolation on the spectral interval of $A$ is the fact that Krylov methods are less impacted by outliers in the spectrum of $A$. In the next experiment, we consider three $2048 \times 2048$ matrices:
 \begin{itemize}
 \item The exponential of $A_1 = \mathrm{tridiag}[-1, 2, -1]$;
 \item The exponential of the matrix $A_2$ which is obtained from $A_1$ by changing the entry in position $(1,1)$ to $10$;
 \item The square root of the matrix $A_3$ which is the tridiagonal matrix with \texttt{linspace(2, 3, n)} on the diagonal and $-1$ on the first super- and sub-diagonals.
\end{itemize}

We run Algorithm~\ref{alg:bandedpoly} with different block sizes and Chebyshev interpolation with different degrees of Chebyshev polynomial and we plot in Figure~\ref{fig:cheby} the relative error in the Frobenius norm versus the number of nonzero entries in the approximation of the matrix functions described above. For the matrix $A_1$, Chebyshev outperforms Algorithm~\ref{alg:bandedpoly}. However, for the matrix $A_2$ which has an outlier in the eigenvalues, and for the matrix $A_3$ for which it is difficult to find a good polynomial approximation on the whole spectral interval, Algorithm~\ref{alg:bandedpoly} achieves a smaller error with the same number of nonzero entries.

\begin{figure}[htb]
\centering
 \includegraphics[scale=0.55]{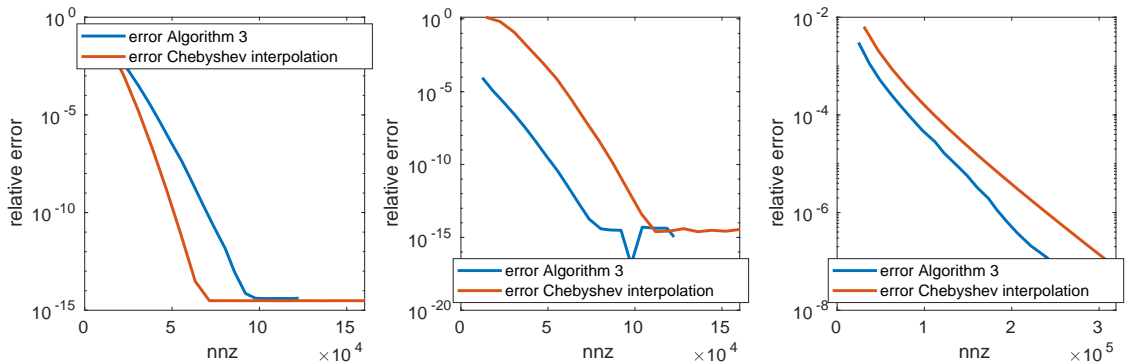}
 \caption{Relative error in the Frobenius norm of the approximations of $\exp(A_1)$,  $\exp(A_2)$, and $\sqrt{A_3}$ from Section~\ref{sec:cheby} obtained by Algorithm~\ref{alg:bandedpoly} and by Chebyshev interpolation.}
 \label{fig:cheby}
\end{figure}

We do not report the timings: In general, Chebyshev interpolation is faster than our splitting algorithm; however, Chebyshev interpolation is only suitable for symmetric matrices (for non-symmetric matrices one needs more refined techniques such as using Faber polynomials as discussed, e.g., in~\cite{BenziRazouk2007}), while the splitting method works for any banded matrix, can automatically adapt to different spectral distributions, and could exploit the Toeplitz structure of $A$ producing an approximation in constant time (as the matrices $D$, $B$, and $C$ are made of equal blocks, we could compute only a constant number of matrix functions of the small blocks).

\subsection{Adaptivity in the size of blocks}\label{sec:adaptiveblocks}
The matrix square root of $A_3$ has slower off-diagonal decay in the upper-left region, as shown in Figure~\ref{fig:adaptiveSpy}(b). We run Algorithm~\ref{alg:adaptiveSplitting} to compute $\sqrt{A_3}$, setting $\varepsilon = 10^{-8}$. The sparsity pattern of the output is shown in Figure~\ref{fig:adaptiveSpy}(a), where different block sizes are selected for different parts of the matrix; the relative error of the computed approximation  is $2.6 \cdot 10^{-10}$ in the Frobenius norm.

 \begin{figure}[htb]
     \centering
     \begin{subfigure}[b]{0.39\textwidth}
         \centering
         \includegraphics[width=0.63\textwidth]{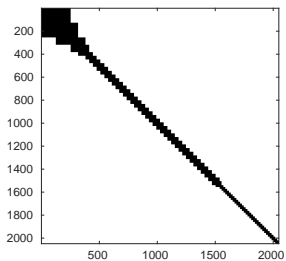}
         \caption{Sparsity structure of the output of Algorithm~\ref{alg:adaptiveSplitting} applied to $A_3$ and $f = \sqrt{~}$.}
     \end{subfigure}
     \hfill
     \begin{subfigure}[b]{0.39\textwidth}
         \centering
         \includegraphics[width=0.73\textwidth]{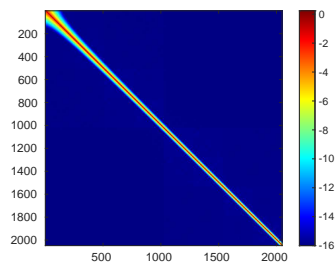}
         \caption{Logarithm of absolute values of entries of $\sqrt{A_3}$.}
     \end{subfigure}
        \caption{The matrix $A_3$ is the tridiagonal matrix with \texttt{linspace(2, 3, n)} on the diagonal and $-1$ on the first super- and sub-diagonals.}
 \label{fig:adaptiveSpy}
\end{figure}


\subsection{Comparison with HSS algorithm}\label{sec:times}

\begin{wrapfigure}[15]{r}[0pt]{0.45\textwidth}
\vspace{-15pt}
 \includegraphics[scale=.5]{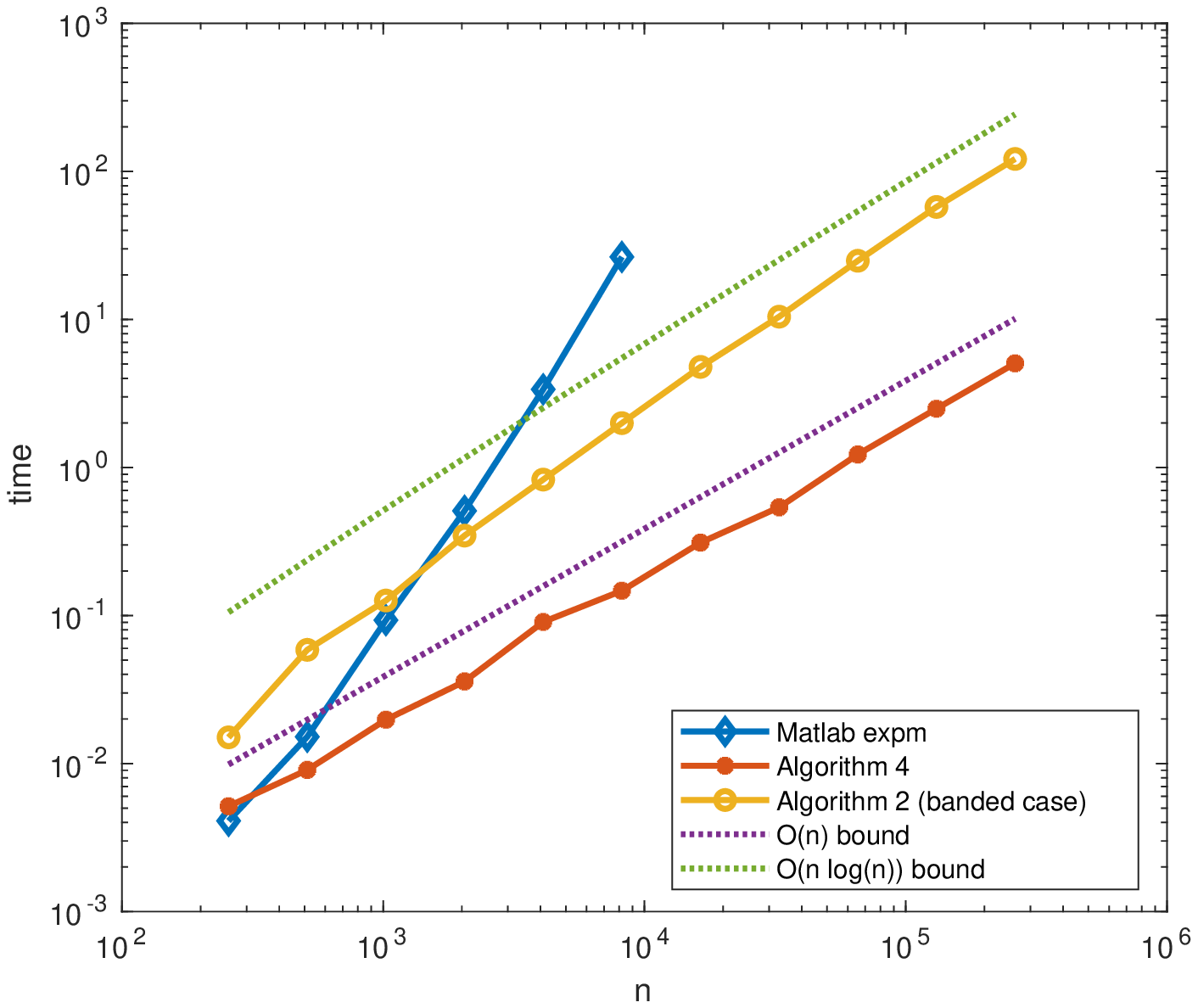}
\caption{Timings of Algorithms~\ref{alg:adaptiveSplitting} and~\ref{alg:D&C} for $\exp(-\mathrm{tridiag}(-1, 2, -1))$.}
\label{fig:times}
\end{wrapfigure}

We expect Algorithm~\ref{alg:adaptiveSplitting} to be faster than the general D\&C algorithm (Algorithm~\ref{alg:D&C}) as the first one should scale as $\mathcal O(n)$ and the latter as $\mathcal O(n \log n)$, plus the fact that we have no overhead computations needed for HSS arithmetic. We compare the timings of the two algorithms for the computation of $\exp(-A)$ for $A = \mathrm{tridiag}(-1, 2, -1)$. For Algorithm~\ref{alg:adaptiveSplitting} we use a minimum block size of $64$, while for Algorithm~\ref{alg:D&C} we set $n_{\min}=128$ and we write each low-rank update as a rank-$1$ update as discussed in Remark~\ref{rmk:spdrankb}; in both cases we set the tolerance parameter $\varepsilon=10^{-8}$. We report the results in Figure~\ref{fig:times}, together with the timings of Matlab's \texttt{expm}, for matrix dimensions ranging from $n = 2^8$ to $n = 2^{18}$.

\section{Conclusions}\label{sec:conclusions}

In this work we have proposed two new algorithms for computing matrix functions of structured matrices, based on a D\&C paradigm. The algorithms have been tested on a wide range of examples of practical relevance that require to compute, for a medium- to large-scale matrix,  the whole matrix function, its diagonal or its trace. The numerical results demonstrate that, most of the time, the proposed methods outperform state-of-art techniques with respect to time consumption and offer a comparable accuracy. 
For the convergence analysis of these algorithms, we have also expanded the framework of low-rank updates of matrix functions \cite{BeckermannKressnerSchweitzer2018,BCKS2020} towards several directions. In the Hermitian case, we have shown that the approximation of the trace of the update, computed by projection on the polynomial Krylov subspace, has a higher convergence rate with respect to the full update. For the splitting algorithm, we have provided a convergence analysis that highlights stronger convergence properties for the entries located on the main diagonal, which applies also to non-Hermitian matrix arguments. 

\paragraph{Acknowledgments.} 

The authors would like to thank Bernhard Beckermann, Paola Boito, and Stefan Güttel for helpful discussions on topics related to this work.
	
	\bibliographystyle{abbrv}
	\bibliography{Bib} 
	
	\end{document}